\title{The geometric realization of a simplicial Hausdorff space is Hausdorff}
\author{Cl\'ement de Seguins-Pazzis\footnote{e-mail adress: dsp.prof@gmail.com}
\footnote{This work
was completed while the author was working on his PhD thesis at the Institut Galil\'ee in Universit\'e Paris Nord,
99 avenue Jean-Baptiste Cl\'ement,
93430 Villetaneuse, FRANCE}}
\date{\today}
\def\defterm{\textbf}
\def\N{\mathbb{N}}
\def\calJ{\mathcal{J}}
\def\calP{\mathcal{P}}
\newcommand{\Hom}{\operatorname{Hom}}
\newcommand{\id}{\operatorname{id}}
\newcommand{\card}{\#}
\newcommand{\red}{\operatorname{red}}
\renewcommand{\epsilon}{\varepsilon}
\renewcommand{\setminus}{\smallsetminus}
\newtheorem{theo}{Theorem}[section]
\newtheorem{prop}[theo]{Proposition}
\newtheorem{cor}[theo]{Corollary}
\newtheorem{lemme}[theo]{Lemma}
\newtheorem{Def}{Definition}[section]
\newtheorem{Not}[Def]{Notation}
\theoremstyle{remark}
\newtheorem{Rems}{Remarks}
\newtheorem{Rem}[Rems]{Remark}
\newtheorem{ex}{Example}
\begin{document}

\maketitle

\begin{abstract}
It is shown that the thin geometric realization of a simplicial Hausdorff space is Hausdorff.
This proves a famous claim by Graeme Segal that the thin geometric realisation of a simplicial
k-space is a k-space.
\end{abstract}

\vskip 2mm
\noindent
\emph{AMS Classification:} 18G30; 05E45; 54D10; 54D50

\vskip 2mm
\noindent
\emph{Keywords:} simplicial space, simplicial category, geometric realization, Hausdorff spaces.

\section{Introduction}

\subsection{The main problem}
In one of his many landmark papers \cite{Segal-class}, Graeme Segal introduced the geometric
realization functor for simplicial spaces, which he called the
``thin'' realization functor in the subsequent article \cite{Segal-cat}. He claimed that the thin geometric realization of a
simplicial space which is compactly-generated Hausdorff degreewise
must be compactly-generated Hausdorff. However, while it is essentially obvious
that the geometric realisation of a simplicial compactly-generated space must be compactly-generated,
the Hausdorff property is a whole different matter since cocartesian squares are implicit in the definition of the thin
geometric realization and they are known to behave badly with respect to separation axioms.
At the time of \cite{Segal-class}, Segal's claim was thought to be dubious and
no convicing proof of it ever appeared in the litterature. This difficulty brought
some to turn away from k-spaces and work with weak-Hausdorff compactly-generated spaces instead. It is much easier indeed to show that
the geometric realization of a compactly-generated weak-Hausdorff space is itself compactly-generated weak-Hausdorff
(this can be done with little effort using the tools from Appendix A of \cite{GaunceLewis}). 
In the following pages, we will prove that Segal was right after all!

\subsection{Definitions and notation}

In this paper, we will use the French notation for the sets of integers:
$\N$ will denote the set of natural numbers (i.e.\ non-negative integers), and $\N^*$ the one of positive integers.
Recall the simplicial category $\Delta$ whose objects are the ordered sets $[n]=\{0,1,\dots,n\}$
for $n\in \mathbb{N}$ and whose morphisms are the non-decreasing maps, with the obvious compositions and identities.
All the morphisms are composites of morphisms of two types, namely the face morphisms
$$\delta_i^k : \begin{cases}
[k] & \rightarrow [k+1] \\
j<i & \mapsto j \\
j\geq i & \mapsto j+1
\end{cases}$$ for $k\in \N$ and $i\in [k+1]$, and the degeneracy
morphisms
$$\sigma_i^k : \begin{cases}
[k] & \rightarrow [k-1] \\
j\leq i & \mapsto j \\
j> i & \mapsto j-1
\end{cases}$$ for $k\in \N^*$ and $i\in [k-1]$.
See \cite{May} for a comprehensive account.

There is (covariant) functor $\Delta^*:\Delta \rightarrow \text{Top}$
which sends $[n]$ to the $n$-simplex $\Delta^n:=\Bigl\{(t_i)_{0\leq i\leq n}\in \mathbb{R}_+^{n+1} :
\underset{i=0}{\overset{n}{\sum}}t_i=1\Bigr\}$, and any morphism
$\delta: [n] \rightarrow [m]$ to
$$\Delta^*(\delta) :
\begin{cases}
\Delta^n & \rightarrow \Delta^m \\
(t_i)_{0\leq i \leq n} & \mapsto
\biggl(\underset{i\in \delta^{-1}(j)}{\sum}t_i\biggr)_{0\leq j\leq m.}
\end{cases}$$

A \textbf{simplicial space} is a contravariant functor $\Delta \rightarrow \text{Top}$.
Given such a functor, we set $A_n:=A([n])$ for any $n\in \mathbb{N}$.
For $k\in \mathbb{N}$, we will write $d_i^k:=A(\delta_i^k)$ for $i\in [k+1]$
(the face maps of $A$), and
$s_i^k:=A(\sigma_i^k)$ for $i\in [k-1]$ (the degeneracy maps of $A$).
When no confusion is possible, we will simply write $\delta_i$ instead of
$\delta_i^k$, $\sigma_i$ instead of
$\sigma_i^k$, $d_i$ instead of $d_i^k$ and $s_i$ instead of $s_i^k$.
If $\delta$ is a morphism in $\Delta$, we will also write $\delta^*$ instead of
$A(\delta)$.

For $n\in \mathbb{N}$, a point $x\in A_n$ is said to be \defterm{degenerate}
when in the image of some $s_i$.

\begin{Def}
The \textbf{thin geometric realization} of a simplicial space $A$, denoted by $|A|$, is 
the quotient space of $\underset{n\in \mathbb{N}}{\coprod}A_n \times \Delta^n$
under the relations
$(x,\Delta^*(\delta)[y])\sim (A(\delta)[x],y)$, for $(m,n)\in \mathbb{N}^2$,
$x\in A_m$, $y\in \Delta_n$ and $\delta \in \Hom_{\Delta}([n],[m])$.
\end{Def}

For every $n\in \mathbb{N}$, we thus have a natural map 
$$\pi_n: A_n \times \Delta^n \rightarrow |A|.$$

\begin{Rem}
Since no homotopy group will be considered here, no confusion should be excepted from our using the notation $\pi_n$
to designate the above map. 
\end{Rem}

\paragraph{}Our simple aim here is to prove the following theorem:

\begin{theo}\label{main}
Let $A$ be a simplicial space and assume that $A_n$ is Hausdorff for each $n \in \N$.
Then, $|A|$ is Hausdorff.
\end{theo}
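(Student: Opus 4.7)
The natural approach is to analyse $|A|$ through its skeletal filtration. For $n\in\N$, let $|A|^{(n)}$ denote the image in $|A|$ of $\coprod_{k\leq n} A_k\times \Delta^k$, so that $|A|^{(0)}\subseteq |A|^{(1)}\subseteq\cdots$ and $|A|$ carries the corresponding colimit topology. The proof then splits into three tasks: (i) each $|A|^{(n)}$ is Hausdorff; (ii) each inclusion $|A|^{(n-1)}\hookrightarrow |A|^{(n)}$ is a closed embedding; (iii) the colimit of a countable chain of Hausdorff spaces connected by closed embeddings is Hausdorff. Tasks (ii) and (iii) should be handled by soft arguments exploiting the compactness of the standard simplices and the Hausdorffness of each $A_m$; the real content is (i), which I would prove by induction on $n$, the base case $|A|^{(0)}=A_0$ being immediate.

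For the inductive step, the plan is to describe $|A|^{(n)}$ as $|A|^{(n-1)}$ with $A_n\times \Delta^n$ glued along the subspace $E_n\subseteq A_n\times \Delta^n$ consisting of pairs $(x,t)$ with either $t\in \partial \Delta^n$ or $x$ degenerate in $A_n$. This decomposition should rest on an Eilenberg--Zilber-type normal form: each orbit of the defining relation contains a unique ``minimal'' representative $(x,t)$ with $x$ non-degenerate and $t$ in the interior of some $\Delta^k$. Granted this, two distinct points $p\neq q$ of $|A|^{(n)}$ are separated by first picking their minimal representatives and then building open neighbourhoods $U, V$ in the appropriate products $A_k\times \Delta^k$; the real work is then to propagate $U$ and $V$ coherently through all face and degeneracy operators so that the saturated preimages $\pi_m^{-1}\pi_m(U)$ and $\pi_m^{-1}\pi_m(V)$ remain open and disjoint in every $A_m\times \Delta^m$.

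I expect this propagation to be the main obstacle. A pushout of Hausdorff spaces along a closed inclusion is not Hausdorff in general, so no purely categorical argument can succeed: one must exploit the Hausdorff property of each $A_m$ in an essential way in order to separate the various faces and degeneracies of the chosen representatives, and combine this with the compactness of the simplices $\Delta^k$ and the closedness of the affine maps $\Delta^*(\delta)$ in order to keep the saturation operation under control. A plausible route is to proceed level by level, starting from the neighbourhoods at the levels of the two chosen representatives and, for every $m\leq n$ and every $\delta\in \Hom_\Delta([m],[k])$, shrinking $U$ and $V$ using Hausdorffness of $A_m$ so that the identification $(x,\Delta^*(\delta)(t))\sim (A(\delta)(x),t)$ cannot force the two neighbourhoods to meet. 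Since only finitely many morphisms of $\Delta$ intervene at each finite stage, finitely many successive refinements should suffice; the delicacy is to ensure that these refinements can be made compatibly in families, which is ultimately what distinguishes the thin realization from the much easier fat realization.
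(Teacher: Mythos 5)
Your proposal is a plan rather than a proof, and two of its load-bearing steps have genuine gaps. The more serious one is step (iii): it is \emph{not} true in general that the colimit of a countable chain of Hausdorff spaces along closed embeddings is Hausdorff. To separate two points of the colimit you must extend a pair of disjoint open sets of $X_N$ to a pair of disjoint open sets of $X_{N+1}$ meeting $X_N$ in the original pair, and then iterate; this extension problem is solvable when the ambient spaces are normal (which is how one proves CW complexes are Hausdorff), but fails for merely Hausdorff spaces --- and here the skeleta are only Hausdorff, since $A_0$ itself is only assumed Hausdorff. This is precisely the phenomenon that made Segal's claim dubious and drove people to weak Hausdorff spaces, so it cannot be dismissed as soft. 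Repairing it forces you to construct the two separating families of open sets \emph{coherently over all degrees at once}, with a guarantee that no further shrinking is ever needed beyond some finite stage; at that point the skeletal filtration is no longer doing any work. This is exactly what the paper does instead: it dispenses with the filtration, builds for each canonical representative $(x,\alpha)$ an explicit one-parameter family of compatible open sets $(U_{k,\epsilon})_{k\in\N}$ in all degrees simultaneously (Sections 2--3), and the heart of the separation argument (Proposition \ref{separationprop}) is a two-stage induction: finitely many shrinkings of $\epsilon$, using compactness of the closures $\overline{W(f,\epsilon)}$, achieve disjointness up to degree $2(n+2)(m+2)$, and a pigeonhole argument on the blocks $f(\{i\})\cap g(\{j\})$ shows disjointness then propagates to all higher degrees with $\epsilon$ fixed. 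That stabilization statement is the missing content of your step (iii).

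The second gap is in the inductive step of (i), which you rightly identify as the main obstacle but do not carry out. ``Shrink $U$ and $V$ finitely many times using Hausdorffness of $A_m$'' does not engage with the actual difficulty: the compatibility condition $(y,f_*(\beta))\in V_{k'} \Leftrightarrow (f^*(y),\beta)\in V_k$ is a two-way constraint, so the saturation $\pi_m^{-1}\pi_m(U)$ of an open set need not be open, and shrinking at one level to restore disjointness destroys saturation at another; it is not clear that any iterative refinement converges to a family that is simultaneously open, saturated, and disjoint from its counterpart. The paper resolves this not by refinement but by an explicit construction that is compatible by design (the $x$-admissible families $U_\sigma$, the sets $U(\sigma)$ and $W(f,\epsilon)$, and the partial order on the morphisms of $\Gamma'$, with Proposition \ref{compatibility} verifying compatibility under all face and degeneracy maps); disjointness is then arranged afterwards by choosing the admissible families of open sets and of intervals appropriately (Lemma \ref{simpset} and the two final lemmas). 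Your Eilenberg--Zilber normal form and the pushout description of $|A|^{(n)}$ along the closed subset of degenerate or boundary pairs are correct, but they only set the stage; the two points above are where the theorem actually lives, and neither is established by the proposal.
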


The proof, although very technical, has a very straightforward basic strategy:
we will give a general construction of ``flexible" open neighborhoods for the points of $|A|$
(see Section \ref{gencon} for the construction and Section \ref{opensec} for the proof of openness),
and then show that those neighborhoods may be used to separate points
(Section \ref{sepsec}). In the rest of the paper, $A$ denotes an arbitrary simplicial space
(no separation assumption will be made until Section \ref{sepsec}).

\subsection{List of notation}

\bigskip

\def\8[#1;#2]{\hbox to \textwidth{#1 \hfill p. #2}\vskip2truemm}
\8[$(x,\alpha)$ \quad ($x\in A_n$ a non-degenerate simplex, $\alpha \in \Delta^n \setminus \partial \Delta^n$);\pageref{gencon}]
\8[$U(\sigma)$, \quad ($\sigma : [k] \twoheadrightarrow [n]$);\pageref{definitionofUparenthesesigma}]
\8[$\Gamma'$;\pageref{categoryGamma'}]
\8[$f : [k] \Rightarrow [k']$, a morphism in $\Gamma'$;\pageref{categoryGamma'}]
\8[$\red(f)$, $\sup(f)$, \quad ($f$ a morphism in $\Gamma'$) ;\pageref{redetsup}]
\8[$t_i(\alpha)$, \quad ($\alpha \in \Delta^n \setminus \partial \Delta_n$);\pageref{definitionofW}]
\8[$W(f,\epsilon)$, \quad ($f : [n] \Rightarrow [k]$ and $\epsilon \in \,\left]0,1\right[$);\pageref{definitionofW}]
\8[$\card_f(i)$, \quad ($f : [k] \Rightarrow [k']$, $i \in [k']$);\pageref{orderingsection}]
\8[$f^{+i}$, \quad ($f : [k] \Rightarrow [k']$, $i\in [k']$);\pageref{fplusi}]
\8[$f \leq g$, $f \subset g$, \quad ($f$ and $g$ two morphisms in $\Gamma'$);\pageref{orderdefinition}]
\8[$U_{k,\epsilon}$, $U_\epsilon$;\pageref{uepsdef}]
\8[$f_{-i}$, \quad ($f : [k] \Rightarrow [k']$, $i\in [k']$);\pageref{fmoinsi}]

\section{Constructing open subsets in a geometric realization}\label{gencon}

In the whole section, we fix an integer $n \in \N$,
a non-degenerate simplex $x \in A_n$ and a point $\alpha \in \Delta^n \setminus \partial \Delta^n$.
Our goal is to give a general construction of non-trivial open neighborhoods of $\pi_n(x,\alpha)$ in $|A|$.
More specifically, we shall construct a whole family $(U_\epsilon)_{\epsilon \in \left]0,1\right[}$
of open neighborhoods of $\pi_n(x,\alpha)$ in $|A|$.  

A basic idea is to construct, for every $k \in \N$, an open subset
$V_k$ of $A_k \times \Delta^k$ such that $(x,\alpha) \in V_n$ and
the family $(V_k)_{k \in \N}$ is \defterm{compatible}, i.e.\
for every morphism $f : [k] \rightarrow [k']$ in $\Delta$:
$$\forall (y,\beta)\in A_{k'} \times \Delta^k, \; (y,f_*(\beta)) \in V_{k'}
\Leftrightarrow (f^*(y),\beta) \in V_{k}\; ;$$
in this case, $\underset{k \in \N}{\bigcup}\,\pi_k(V_k)$ is a subset of $|A|$ that contains
$\pi_n(x,\alpha)$, and its inverse image by $\pi_k$ is $V_k$ for each $k \in \N$; from the very definition of the topology on $|A|$, 
it follows that $\underset{k \in \N}{\bigcup}\,\pi_k(V_k)$ is an open subset of $|A|$. 

\begin{Rem}
Since every morphism in the simplicial category is a composite of face and degeneracy morphisms,
a family $(V_k)_{k \in \N}$ is compatible if and only if it satisfies the following two sets of properties:
\begin{equation}\label{faceprop}
\forall k \in \N^*, \; \forall (y,\beta) \in A_k \times \Delta^{k-1}, \; \forall i \in [k], \;
(y,(\delta_i)_*(\beta)) \in V_k \, \Leftrightarrow \, (d_i(y),\beta) \in V_{k-1.}
\end{equation}
\begin{equation}\label{degprop}
\forall k \in \N, \; \forall (y,\beta) \in A_k \times \Delta^{k+1}, \; \forall i \in [k], \;
(y,(\sigma_i)_*(\beta)) \in V_k \, \Leftrightarrow \, (s_i(y),\beta) \in V_{k+1.}
\end{equation}
\end{Rem}

\subsection{Suitable families of open subsets of the $A_k$'s}\label{suitfam}

Our starting point is the following very basic lemma on simplicial sets:

\begin{lemme}\label{degenlemma}
Let $x \in A_n$ be a non-degenerate simplex.
Let $\sigma : [N] \twoheadrightarrow [n]$ and $\tau : [N] \twoheadrightarrow [m]$ be epimorphisms. Then, the following conditions are equivalent:
\begin{enumerate}[(i)]
\item The simplex $\sigma^*(x)$ belongs to $\tau^*(A_m)$.
\item There is an epimorphism $\rho : [m] \twoheadrightarrow [n]$ such that $\sigma=\rho \circ \tau$.
\item $\forall (i,j)\in [N]^2, \; \tau(i)=\tau(j) \Rightarrow \sigma(i)=\sigma(j)$.
\end{enumerate}
\end{lemme}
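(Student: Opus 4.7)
The plan is to close the cycle via (ii)$\,\Rightarrow\,$(i), (ii)$\,\Leftrightarrow\,$(iii), and (i)$\,\Rightarrow\,$(ii); only the last implication is substantive.

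The functorial implication (ii)$\,\Rightarrow\,$(i) is immediate: if $\sigma=\rho\tau$, then $\sigma^*(x)=\tau^*\rho^*(x)\in \tau^*(A_m)$. For the combinatorial equivalence (ii)$\,\Leftrightarrow\,$(iii), the direction (ii)$\,\Rightarrow\,$(iii) is trivial, and for (iii)$\,\Rightarrow\,$(ii) I would define $\rho:[m]\to [n]$ by $\rho(k):=\sigma(i)$ for any $i\in \tau^{-1}(k)$: the fiber is nonempty since $\tau$ is surjective, and condition (iii) makes the choice of $i$ irrelevant. That $\rho$ is non-decreasing follows from the elementary fact that the fibers of a non-decreasing surjection in $\Delta$ are consecutive intervals in $[N]$: for $k<k'$, every element of $\tau^{-1}(k)$ lies strictly below every element of $\tau^{-1}(k')$, and the monotonicity of $\sigma$ then yields $\rho(k)\leq \rho(k')$. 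Surjectivity of $\rho$ follows from $\rho\tau=\sigma$ being surjective.

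For the main implication (i)$\,\Rightarrow\,$(ii), I would invoke the Eilenberg--Zilber lemma: every simplex in a simplicial set admits a unique decomposition as $\mu^*(z)$ with $\mu$ an epimorphism in $\Delta$ and $z$ non-degenerate. Writing $y=\mu^*(z)$ with $\mu:[m]\twoheadrightarrow [k]$ an epimorphism and $z\in A_k$ non-degenerate, the hypothesis $\sigma^*(x)=\tau^*(y)$ becomes
$$\sigma^*(x) \;=\; (\mu\tau)^*(z).$$
Both sides are Eilenberg--Zilber decompositions of the same simplex of $A_N$ (non-degenerate simplex pulled back along an epimorphism), so uniqueness forces $n=k$, $x=z$, and $\sigma=\mu\tau$; in particular $\rho:=\mu$ witnesses (ii).

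The main (indeed only) obstacle is the use of Eilenberg--Zilber; all other steps are routine unwindings of definitions in the combinatorics of $\Delta$. A self-contained argument avoiding E--Z is also possible by induction on $N-m$: factor $\tau$ as a composite of elementary degeneracies $\sigma_{i_0}^N$ and, using the simplicial identities $d_{i_0}s_{i_0}=d_{i_0+1}s_{i_0}=\id$ together with the non-degeneracy of $x$, force $\sigma$ to identify $i_0$ with $i_0+1$ at each step, peeling off one elementary factor at a time.
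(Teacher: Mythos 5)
Your proof is correct, but the key implication (i)~$\Rightarrow$~(ii) is handled by a genuinely different route from the paper's. The paper argues by hand with a section $\delta:[n]\hookrightarrow[N]$ of $\sigma$: from $\sigma^*(x)=\tau^*(y)$ it gets $(\tau\circ\delta)^*(y)=x$, and if $\tau\circ\delta$ were not injective it would factor through a degeneracy morphism of $\Delta$, exhibiting $x$ as degenerate; condition (iii) is then meant to be read off from this injectivity (one must run the argument with a section passing through a hypothetical pair $p,p+1$ satisfying $\tau(p)=\tau(p+1)$ and $\sigma(p)\neq\sigma(p+1)$ --- a final step the paper leaves implicit, since a single badly chosen section does not suffice). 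You instead invoke the Eilenberg--Zilber lemma: $\sigma^*(x)$ and $(\mu\circ\tau)^*(z)$ are two presentations of the same simplex of $A_N$ as an epimorphism applied to a non-degenerate simplex, so uniqueness yields $x=z$, $n=k$ and $\sigma=\mu\circ\tau$ in one stroke. What your route buys is brevity, an explicit witness $\rho=\mu$, and no case analysis; what it costs is the appeal to Eilenberg--Zilber, a standard but non-trivial external result (it is in May's book, which the paper already cites), whose own proof is essentially the section-and-degeneracy argument the paper performs directly --- so the two proofs are close cousins, yours being the pre-packaged version. Your verification of (ii)~$\Leftrightarrow$~(iii), via well-definedness and monotonicity of $\rho(k):=\sigma(i)$ for $i\in\tau^{-1}(k)$, is exactly the routine combinatorics the paper declares trivial, and is correct; the E--Z-free induction you sketch at the end is a plausible fallback but would need some care at steps where $\sigma\circ\delta_{i_0}$ is no longer surjective.
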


\begin{proof}
The only non-trivial statement is that (i) implies (ii). Assume then that $\sigma^*(x)=\tau^*(y)$ for some
$y \in A_m$. We may choose a section $\delta : [n] \hookrightarrow [N]$ of $\sigma$, whence
$(\tau \circ \delta)^*(y)=(\sigma \circ \delta)^*(x)=x$. If $\tau \circ \delta$ were not one-to-one, we would be able
to decompose it as $\tau \circ \delta=\sigma' \circ s_i^n$ for some $i$ and some morphism $\sigma'$,
which would yield $x \in s_i(A_{n-1})$, contradicting the fact that $x$ is non-degenerate.
\end{proof}

\begin{Def}
Let $N \geq n$ be an integer and $x \in A_n$ be a non-degenerate simplex.
A family $(U_\sigma)_{\sigma : [N]\twoheadrightarrow [n]}$
is called \defterm{$x$-admissible} when:
\begin{enumerate}[(i)]
\item The set $U_\sigma$ is an open neighborhood of $\sigma^*(x)$ in $A_N$
for every epimorphism $\sigma : [N]\twoheadrightarrow [n]$.
\item For every $\sigma : [N]\twoheadrightarrow [n]$ and every
$\tau : [N]\twoheadrightarrow [m]$, one has
$$U_\sigma \cap \tau^*(A_m) \neq \emptyset \,\Leftrightarrow\, (\exists \rho : [m]
\twoheadrightarrow [n] :\; \sigma =\rho \circ \tau).$$
\end{enumerate}
\end{Def}

\begin{Rem}
Let $\sigma : [N] \twoheadrightarrow [k]$ and $\tau : [N]\twoheadrightarrow [m]$.
By the previous lemma, $\sigma_*(x)$ does not belong to the union of all
$\tau^*(A_m)$, for $N \geq m$, where $\tau$ ranges over the set of all epimorphisms from $[N]$ for which no epimorphism
$\rho$ satisfies $\sigma =\rho \circ \tau$. Assuming that, for every $\sigma$, we may find an open neighborhood
$U_\sigma$ of $\sigma^*(x)$ which is disjoint from this union, then the family $(U_\sigma)_{\sigma : [N]\twoheadrightarrow [n]}$ is obviously $x$-admissible.
\end{Rem}

\begin{ex}\label{exvois}
Assume that $A_k$ is Hausdorff for every $k \in \N$. Let $\sigma : [N] \twoheadrightarrow [k]$.
Then, for every epimorphism $\tau : [N]\twoheadrightarrow [m]$ such that no $\rho$ satisfies $\sigma =\rho \circ \tau$,
the subset $\tau^*(A_m)$ is closed in $A_N$ since it is a retract of $A_N$, whence the (finite) union of all such subsets
is closed in $A_N$ and we may choose $U_\sigma$ as its complementary subset in $A_N$ (and any open neighborhood of $\sigma^*(x)$
in this complementary subset will also do).
\end{ex}

\paragraph{}
Throughout the rest of the section, we set an $x$-admissible family $(U_\sigma)_{\sigma : [N] \twoheadrightarrow [n]}$
which we first extend as follows: given $k \leq N$ and $\sigma : [k] \twoheadrightarrow [n]$,
we set 
\label{definitionofUindicesigma}
$$U_{\sigma}:=\underset{\tau : [N] \twoheadrightarrow [k]}{\bigcap} (\tau^*)^{-1}(U_{\sigma \circ \tau}) \subset
A_k.$$
The following properties then generalize the axioms defining an $x$-admissible family:

\begin{lemme}\label{firstproperties}
Let $\sigma : [k] \twoheadrightarrow [n]$ with $k \leq N$. Then,
\begin{enumerate}[(i)]
\item One has $\sigma ^*(x) \in U_\sigma$.
\item For every $\sigma' : [k'] \twoheadrightarrow [k]$ with $k' \leq N$, one has
$(\sigma')^*(U_\sigma) \subset U_{\sigma \circ \sigma'}$.
\item For every $\tau : [k] \twoheadrightarrow [k']$, the condition $U_\sigma \cap \tau^*(A_{k'}) \neq \emptyset$
is equivalent to $\exists \rho: [k'] \twoheadrightarrow [n]: \; \rho \circ \tau =\sigma$.
\end{enumerate}
\end{lemme}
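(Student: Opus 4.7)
The plan is to derive each of the three assertions directly from the definition
$$U_\sigma = \bigcap_{\tau : [N]\twoheadrightarrow[k]}(\tau^*)^{-1}\bigl(U_{\sigma\circ\tau}\bigr)$$
together with the two original admissibility axioms, which apply only to epimorphisms out of $[N]$. For (i), I will simply unwrap the definition: for any $\tau : [N]\twoheadrightarrow[k]$ the identity $\tau^*(\sigma^*(x))=(\sigma\circ\tau)^*(x)$ combined with admissibility axiom (i) for the epimorphism $\sigma\circ\tau:[N]\twoheadrightarrow[n]$ places this point in $U_{\sigma\circ\tau}$, and hence $\sigma^*(x)$ lies in the full intersection defining $U_\sigma$.

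For (ii), given $y\in U_\sigma$ and an arbitrary $\tau':[N]\twoheadrightarrow[k']$, the associativity of pullback gives $(\tau')^*((\sigma')^*(y)) = (\sigma'\circ\tau')^*(y)$. Since $\sigma'\circ\tau'$ is an epimorphism $[N]\twoheadrightarrow[k]$, the very definition of $U_\sigma$ forces $(\sigma'\circ\tau')^*(y)\in U_{\sigma\circ\sigma'\circ\tau'}=U_{(\sigma\circ\sigma')\circ\tau'}$. Letting $\tau'$ vary over all epimorphisms $[N]\twoheadrightarrow[k']$ yields exactly the membership $(\sigma')^*(y)\in U_{\sigma\circ\sigma'}$.

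The heart of the lemma is (iii). The direction $\Leftarrow$ is immediate from (i): if $\sigma=\rho\circ\tau$ then $\sigma^*(x)=\tau^*(\rho^*(x))$ already exhibits a point of $U_\sigma\cap\tau^*(A_{k'})$. For the converse I will lift the question to level $N$. Pick any epimorphism $\tau_0:[N]\twoheadrightarrow[k]$ (which exists since $k\leq N$) and any $y\in U_\sigma$ with $y=\tau^*(z)$ for some $z\in A_{k'}$. Then $\tau_0^*(y)=(\tau\circ\tau_0)^*(z)$ witnesses the non-emptiness of $U_{\sigma\circ\tau_0}\cap(\tau\circ\tau_0)^*(A_{k'})$ inside $A_N$. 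Both $\sigma\circ\tau_0$ and $\tau\circ\tau_0$ are epimorphisms out of $[N]$, so the original admissibility axiom (ii) yields an epimorphism $\rho:[k']\twoheadrightarrow[n]$ with $\sigma\circ\tau_0=\rho\circ\tau\circ\tau_0$, and right-cancellation of $\tau_0$ (which is legitimate because epimorphisms in $\Delta$ are surjections of finite ordered sets, hence right-cancellable) delivers the required $\sigma=\rho\circ\tau$. The only step requiring any care is this reduction to level $N$; nothing deeper than the admissibility axioms themselves is needed.
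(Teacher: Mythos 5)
Your proof is correct and follows essentially the same route as the paper's: each part is unwound to level $N$ by composing with an arbitrary epimorphism $[N]\twoheadrightarrow[k]$ (resp.\ $[N]\twoheadrightarrow[k']$), the original admissibility axioms are applied there, and for (iii) the resulting identity $\sigma\circ\tau_0=\rho\circ\tau\circ\tau_0$ is right-cancelled using surjectivity of $\tau_0$, exactly as in the paper. No gaps.
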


\begin{proof}
\begin{enumerate}[(i)]
\item Indeed $\tau^*(\sigma^*(x))=(\sigma \circ \tau)^*(x) \in U_{\sigma \circ \tau}$ for every $\tau : [N] \twoheadrightarrow [k]$.
\item Let $x' \in U_{\sigma }$ and $\tau: [N]
\twoheadrightarrow [k']$.
Then $\tau^*((\sigma')^*(x'))=(\sigma' \circ \tau)^*(x')
\in U_{\sigma \circ (\sigma' \circ \tau)}
=U_{(\sigma \circ \sigma') \circ \tau}$. Thus $(\sigma')^*(x')
\in U_{\sigma \circ \sigma'.}$
\item Let $x \in A_{k'}$
such that $\tau^*(x) \in U_\sigma$. Choose an arbitrary $\sigma': [N]
  \twoheadrightarrow [k]$. Then $(\tau \circ \sigma')^*(x)=(\sigma')^*(\tau^*(x)) \in U_{\sigma \circ \sigma'}$.
It follows from axiom (ii) that some $\rho : [k'] \twoheadrightarrow [n]$ satisfies
$\sigma \circ \sigma'=\rho \circ \tau \circ \sigma'$, whence $\sigma =\rho \circ \tau$ since $\sigma'$ is onto. The converse is trivial.
\end{enumerate}
\end{proof}

\subsection{The open subsets $U(\sigma)$}\label{upar}

\begin{Def}
For $\sigma : [k] \twoheadrightarrow [n]$, set
$$I_\sigma:=\Bigl\{\delta : [k'] \hookrightarrow [k] \quad
  \text{s.t.} \quad \sigma \circ \delta :
[k'] \twoheadrightarrow [n] \quad \text{and} \quad k' \leq N \Bigl\}$$
and\label{definitionofUparenthesesigma}
$$U(\sigma):=\underset{\delta \in I_\sigma}{\bigcap}(\delta^*)^{-1}(U_{\sigma \circ \delta}) \subset A_k.$$
\end{Def}
Clearly, $I_\sigma$ is non-empty and, better still, for every $(i,j) \in [k]^2$ such that $\sigma(i) \neq \sigma(j)$, there is
some $\delta \in I_\sigma$ with $i$ and $j$ in its range.

\vskip 2mm
\noindent The $U(\sigma)$ sets have the following main properties:

\begin{prop}\label{uproperties}
Let $\sigma : [k] \twoheadrightarrow [n]$. Then,
\begin{enumerate}[(a)]
\item The set $U(\sigma)$ is an open neighborhood of $\sigma^*(x)$ in $A_k$.
\item One has $U(\sigma) \subset U_\sigma$ whenever $k \leq N$.
\item For every $\delta: [i] \hookrightarrow [k]$ such that $\sigma \circ \delta$ is onto, one has
$\delta^*(U(\sigma)) \subset U(\sigma \circ \delta)$.
\item For every $\tau: [k'] \twoheadrightarrow [k]$, one has $\tau^*(U(\sigma)) \subset U(\sigma \circ \tau)$.
\item For every $\tau: [k] \twoheadrightarrow [k']$, the condition
$\tau^*(A_{k'}) \cap U(\sigma) \neq \emptyset$ is equivalent to
the existence of some $\rho : [k'] \twoheadrightarrow [n]$ such that $\sigma=\rho \circ \tau$.
\end{enumerate}
\end{prop}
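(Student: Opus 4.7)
\textbf{Plan of proof of Proposition \ref{uproperties}.}

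My plan is to treat the five items in order, exploiting the extended family of $U_\sigma$'s and the properties already gathered in Lemma \ref{firstproperties}. For (a), I would observe that $I_\sigma$ is finite (as the $\delta$'s are monomorphisms with codomain $[k]$), so $U(\sigma)$ is a finite intersection of preimages of open sets under continuous maps, hence open; the fact that $\sigma^*(x)\in U(\sigma)$ reduces, via $\delta^*(\sigma^*(x))=(\sigma\circ\delta)^*(x)$, to Lemma \ref{firstproperties}(i) applied to $\sigma\circ\delta$ for each $\delta\in I_\sigma$. For (b), I would simply pick $\delta=\id_{[k]}$ in the intersection defining $U(\sigma)$, which is a legitimate element of $I_\sigma$ precisely because $k\leq N$.

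For (c), the main observation is that for any $\delta'\in I_{\sigma\circ\delta}$, the composite $\delta\circ\delta'$ again lies in $I_\sigma$ (it is a monomorphism into $[k]$, the target of the domain has cardinality $\leq N$, and $\sigma\circ\delta\circ\delta'$ is onto $[n]$). Thus if $y\in U(\sigma)$, then $(\delta\circ\delta')^*(y)\in U_{\sigma\circ\delta\circ\delta'}$ for every such $\delta'$, which is exactly saying $\delta^*(y)\in U(\sigma\circ\delta)$. For (d), the point is that although $\tau\circ\delta'$ for $\delta'\in I_{\sigma\circ\tau}$ is not necessarily a monomorphism (since $\tau$ is only surjective), one may factor it canonically as $\tau\circ\delta'=\delta''\circ\tau'$ with $\delta''$ mono and $\tau'$ epi. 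One then checks that $\delta''\in I_\sigma$ (using that $\sigma\circ\delta''\circ\tau'=\sigma\circ\tau\circ\delta'$ is onto $[n]$ and $\tau'$ is epi, hence $\sigma\circ\delta''$ is onto $[n]$), and then applies Lemma \ref{firstproperties}(ii) to $(\tau')^*(U_{\sigma\circ\delta''})\subset U_{\sigma\circ\delta''\circ\tau'}=U_{\sigma\circ\tau\circ\delta'}$, to conclude that $\tau^*(y)\in U(\sigma\circ\tau)$ whenever $y\in U(\sigma)$.

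I expect (e) to be the main obstacle, since there one must globally reconstruct a factorization from local data. The reverse implication is painless: if $\sigma=\rho\circ\tau$ with $\rho$ epi, then $\sigma^*(x)=\tau^*(\rho^*(x))\in\tau^*(A_{k'})\cap U(\sigma)$ by (a). For the direct implication, assume $\tau^*(y)\in U(\sigma)$ for some $y\in A_{k'}$. Via Lemma \ref{degenlemma}(iii), it is enough to show that $\tau(i_1)=\tau(i_2)$ implies $\sigma(i_1)=\sigma(i_2)$ for every $(i_1,i_2)\in[k]^2$. Argue by contraposition: if $\sigma(i_1)\neq\sigma(i_2)$, then by the remark following the definition of $U(\sigma)$ there is some $\delta\in I_\sigma$ whose image contains $i_1$ and $i_2$. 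Then $\delta^*(\tau^*(y))=(\tau\circ\delta)^*(y)\in U_{\sigma\circ\delta}$, so after factoring $\tau\circ\delta=\delta''\circ\tau'$ as in (d), one gets $(\tau')^*((\delta'')^*(y))\in U_{\sigma\circ\delta}$ and hence $U_{\sigma\circ\delta}\cap(\tau')^*(A_j)\neq\emptyset$. Lemma \ref{firstproperties}(iii) yields an epi $\rho_\delta$ with $\sigma\circ\delta=\rho_\delta\circ\tau'$, whence by Lemma \ref{degenlemma}(iii) applied on the domain $[k'_\delta]$, $(\tau\circ\delta)(i)=(\tau\circ\delta)(j)\Rightarrow(\sigma\circ\delta)(i)=(\sigma\circ\delta)(j)$. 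Pulling back via $\delta^{-1}$ on its image, this forces $\tau(i_1)\neq\tau(i_2)$, as desired. The non-decreasing $\rho:[k']\twoheadrightarrow[n]$ satisfying $\sigma=\rho\circ\tau$ is then produced using surjectivity of $\tau$ and monotonicity of $\sigma$. The delicate point throughout is keeping the various epi-mono factorizations properly aligned with the $N$-bound on the sizes of domains, but this is guaranteed by $k'_\delta\leq N$ for $\delta\in I_\sigma$.
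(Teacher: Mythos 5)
Your proposal is correct and follows essentially the same route as the paper: items (a)--(c) by direct appeal to the definition of $I_\sigma$ and Lemma \ref{firstproperties}, item (d) via the epi-mono factorization of $\tau\circ\delta'$ combined with Lemma \ref{firstproperties}(ii), and item (e) by reducing to the combinatorial implication $\tau(i_1)=\tau(i_2)\Rightarrow\sigma(i_1)=\sigma(i_2)$ through the remark that any pair separated by $\sigma$ lies in the range of some $\delta\in I_\sigma$. Your explicit remarks on the finiteness of $I_\sigma$ (for openness) and on the $N$-bound surviving the factorizations are correct and, if anything, slightly more careful than the paper's own wording.
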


\begin{proof}
\begin{enumerate}[(a)]
\item trivially derives from statement (i) in Lemma \ref{firstproperties} and the definition of $I_\sigma$.
\item is obvious since $\id_{[k]} \in I_\sigma$ whenever $k \leq N$.
\item Let $x' \in U(\sigma)$ and $\delta' \in I_{\sigma \circ \delta}$. Then $\delta \circ \delta' \in I_\sigma$ and so
$(\delta')^*(\delta^*(x))=(\delta \circ \delta')^*(x') \in U(\sigma \circ \delta \circ \delta')$.
Thus $\delta^*(x) \in U(\sigma \circ \delta)$.
\item Let $x \in U(\sigma)$. Let $\delta : [i] \hookrightarrow [k']$ in $I_{\sigma \circ \tau}.$
We decompose $\tau \circ \delta=\delta' \circ \tau'$ where $\delta'$ is a monomorphism and $\tau'$ an epimorphism.
Since $\sigma \circ \tau \circ \delta=\sigma \circ \delta' \circ \tau'$ is an epimorphism, $\sigma \circ \delta'$
also is, whence $(\delta')^*(x) \in U_{\sigma \circ \delta'}$
(notice that the domain of $\delta'$ is $[j]$ for some $j \leq i \leq N$ since $\tau'$ is onto). By statement (iii) in Lemma \ref{firstproperties},
it follows that $(\tau')^*\bigl((\delta')^*(x)\bigr) \in U_{\sigma \circ \delta' \circ \tau'}$, 
whence $\delta^*\bigl(\tau^*(x)\bigr) \in U_{\sigma \circ \tau \circ \delta.}$
Therefore, $\tau^*(x) \in U(\sigma \circ \tau)$.

\item If $\sigma=\rho \circ \tau$ for some $\rho : [k'] \rightarrow [n]$, then
$\sigma^*(x)=\tau^*(\rho^*(x)) \in \tau^*(A_{k'}) \cap U(\sigma)$. \\
Assume conversely that there is some $x \in A_{k'}$ such that $\tau^*(x) \in U(\sigma)$.
Let $\delta \in I_\sigma$ with domain $[i]$. Then $(\tau \circ \delta)^*(x) \in U_{\sigma \circ \delta.}$
Then, let us decompose $\tau \circ \delta=\delta' \circ \tau'$, where $\delta'$ is a monomorphism and $\tau'$ an epimorphism.
Then, $(\tau')^*((\delta')^*(x)) \in U_{\sigma \circ \delta}$, whence statement (iii) in Lemma \ref{firstproperties}
shows that $\forall (y,z)\in [i]^2, \; \tau'(y)=\tau'(z) \Rightarrow (\sigma \circ \delta)(y)=(\sigma \circ \delta)(z)$.
Since $\delta'$ is one-to-one, this yields $\tau(y')=\tau(z') \Rightarrow \sigma(y')=\sigma(z')$ for every $y'$ and $z'$ in the range of $\delta$.
Let finally $(y,z) \in [k]^2$ be such that $\sigma(y) \neq \sigma(z)$. By a previous remark, there is some $\delta \in I_\sigma$
the range of which contains $y$ and $z$. Hence, $\tau(y) \neq \tau(z)$.
\end{enumerate}
\end{proof}

\subsection{The category $\Gamma'$}\label{categoryGamma'}

The relation $\leq$ on $\calP(\N)$ defined by
$$A \leq B\; \overset{\text{def}}{\Leftrightarrow}\; \bigl(A=B \quad \text{or} \quad \sup A < \inf B\bigr)$$
yields a structure of poset\footnote{Notice that $\emptyset$ is the minimum element of $\calP(\N)$ for $\leq$
with the usual convention that $\sup \emptyset=-\infty$ and $\inf \emptyset=+\infty$.}
on $\calP(\N)$. We define the category $\Gamma '$ as the one with the same objects as $\Delta$
and for which, for any $(k,k')\in \N^2$, the morphisms from $[k]$ to $[k']$
are the increasing maps $\mathcal{P}([k]) \rightarrow \mathcal{P}([k']) $
which respect disjoint unions and map non-empty sets to non-empty sets, with the obvious composition of morphisms.
To avoid any confusion with the simplicial category, a morphism $f$ from $[k]$ to $[k']$ in $\Gamma'$
will be written $f : [k] \Rightarrow [k']$.

A morphism $f : [k] \Rightarrow [k']$ in $\Gamma'$ is called \textbf{onto} when $f([k])=[k']$.
To every such morphism corresponds an epimorphism $\sigma : [k'] \twoheadrightarrow [k]$ in $\Delta$
defined by: $\forall i \in [k'], \; i \in f \big\{\sigma(i)\bigr\}$.
Conversely, to every epimorphism $f : [k'] \twoheadrightarrow [k]$ in $\Gamma$ corresponds a unique
$\Gamma '(f): [k] \Rightarrow [k']$ defined by $\forall A \in \calP([k]), \; \Gamma '(f)(A)=f^{-1}(A)$.
Clearly, we have just defined reciprocal bijections between the set of epimorphisms from $[k']$ to $[k]$ in $\Delta$
and the set of onto morphisms from $[k]$ to $[k']$ in $\Gamma '$.

\begin{Not}\label{redetsup}
Let $f:[k] \Rightarrow [k']$ and $\delta: [k'] \hookrightarrow [k'']$.
Define then $\delta _*(f):[k] \Rightarrow [k'']$ by
$$\forall A \subset [k], \;  \delta _*(f)(A)=\delta\bigl(f(A)\bigr).$$
Then, every $f: [k] \Rightarrow [k']$ clearly has a unique decomposition as
$f=\delta _*(g)$ for an onto morphism $g:[k] \Rightarrow [k'']$ and a monomorphism
$\delta:[k''] \hookrightarrow [k']$ in $\Delta$.
We set:
$$\red(f):=g \quad \text{and} \quad \sup(f):=\delta.$$
\end{Not}

\subsection{The family $\bigl(W(f,\epsilon)\bigr)$ of open subsets of the simplicies}\label{definitionofW}

In the beginning of Section \ref{gencon}, we have fixed an arbitrary point $\alpha \in \Delta^n \setminus \partial \Delta^n$. 
Let us write $\alpha=\bigl(t_0(\alpha),\ldots,t_n(\alpha)\bigr)$, whence $t_i(\alpha)>0$ for every $i \in [n]$.

\begin{Def}
A family $(I_{i,j})_{0\leq i<j \leq n}$ of open intervals
of $]0,+\infty[$ with compact closure in $]0,+\infty[$ is called \defterm{$\alpha$-admissible}
when $\dfrac{t_j(\alpha)}{t_i(\alpha)}$ belongs to $I_{i,j}$ for every pair $(i,j)\in [n]^2$ such that $i<j$.
\end{Def}

Clearly, such a family exists, and we may choose one for the rest of the section.
Let then $\epsilon \in \,\left]0,1\right[$.
For any $f:[n] \Rightarrow [k]$, we define $W(f,\epsilon) \subset \Delta^k$ as the subset consisting of the points
$\beta=(t_0,\dots,t_k) \in \Delta^k$ for which
$$\forall i \in [n], \;\underset{p \in f(\{i\})}{\sum}t_p >0 \quad, \quad
\forall (i,j) \in [n]^2, \; i<j \,\Rightarrow\, \dfrac{\underset{p \in f(\{j\})}{\sum}t_p}{\underset{p \in f(\{i\})}{\sum}t_p} \in I_{i,j}
\quad \text{and} \quad \underset{p \in f([n])}{\sum}t_p > 1-\epsilon.$$
Obviously, this is an open convex subset of $\Delta^k$ and its closure is the set of those points $(t_0,\dots,t_k) \in \Delta^k$
for which
$$\forall i \in [n], \; \underset{p \in f(\{i\})}{\sum}t_p >0 \quad, \quad
\forall (i,j) \in [n]^2, \; i<j \,\Rightarrow\,   \frac{\underset{p \in f(\{j\})}{\sum}t_p}{\underset{p \in f(\{i\})}{\sum}t_p} \in \overline{I_{i,j}} \quad \text{and} \quad \underset{p \in f([n])}{\sum}t_p \geq 1-\epsilon.$$
Notice finally that $\alpha \in W(\id_{[m]},\epsilon)$.

\subsection{Completing the construction of $U_\epsilon$}

We are now almost ready to construct the family $(U_\epsilon)$ of open neighborhoods of $\pi_n(x,\alpha)$ we were looking for. 

\subsubsection{The open sets $U(f)$}\label{definitionofU(f)}

Given an onto morphism $f:[n] \Rightarrow [k]$, thus corresponding to an epimorphism
$\sigma : [k] \twoheadrightarrow [n]$, we set
$U(f):=U(\sigma) \subset A_k$.
For an arbitrary $f:[n] \Rightarrow [k]$, we set
$$U(f):=(\sup(f)^*)^{-1}(U(\red(f)) \subset A_k.$$
Obviously, $U(f)$ is an open subset of $A_k$.

\vskip 3mm
When we started working on the separation properties for simplicial spaces, one of our initial 
ideas was, for an arbitrary $\epsilon \in \,\left]0,1\right[$, to define $U_\epsilon$ as 
$$\underset{k \in \N}{\bigcup}\,\pi_k\left(  \underset{f:[n] \Rightarrow [k]}{\bigcup}U(f)\times W(f,\epsilon)\right).$$
It happens however that this does not deliver the desired construction because, in general, the family of spaces 
$\left( \underset{f:[n] \Rightarrow [k]}{\bigcup}U(f)\times W(f,\epsilon)\right)_{k \in \N}$ is not compatible. 
More precisely, conditions \eqref{faceprop} are always met by this sequence, but conditions \eqref{degprop} might not. 
To deal with that problem, we need to add an extra layer of complexity to the construction: 
this involves an ordering on the set of morphisms of the category $\Gamma '$, which we shall explain in the next paragraph. 
Afterwards, we will finally be able to give the actual definition of the $U_\epsilon$ subsets. 

\subsubsection{Ordering the morphisms of $\Gamma '$}\label{orderingsection}

Let $f:[k] \Rightarrow [k']$ and $i \in [k']$.
Set
$$\card_f(i):=\begin{cases}
\#f(\{j\}) & \text{whenever $i \in f(\{j\})$} \\
0 & \text{when $i \not\in f([k])$}.
\end{cases}
$$
\label{fplusi}
Assume now that $i<k'$. 
If $\card_f(i)=0$ and $\card_f(i+1)\geq 1$, we define
$f^{+i}$ by:
$$f^{+i}\{j\}=\begin{cases}
f(\{j\})\cup \{i\} & \text{when $i+1 \in f\{j\}$} \\
f\{j\} & \text{otherwise.}
\end{cases}$$
If $\card_f(i)\geq 1$ and $\card_f(i+1)=0$, we define
$f^{+i}$ by:
$$f^{+i}\{j\}=\begin{cases}
f(\{j\})\cup \{i+1\} & \text{when $i \in f(\{j\})$} \\
f\{j\} & \text{otherwise.}
\end{cases}$$
In any case, $f^{+i}$ is obtained from $f$ by attaching $i$ or $i+1$ to an adjacent set of the form $f\{k\}$.
We denote by $\mathcal{R}$ the binary relation defined on
$\Hom_{\Gamma  '}([k],[k'])$ by
$f \mathcal{R} f^{+i}$ for every $i$ and every $f$ for which
$f^{+i}$ is defined. We then define $\leq$ as the pre-order relation generated by $\mathcal{R}$.\label{orderdefinition}
Actually, this is an order relation on $\Hom_{\Gamma '}([k],[k'])$. Consider indeed the
order relation $\subset$ on $\Hom_{\Gamma '}([k],[k'])$
defined by
$$f \subset g \; \overset{\text{def}}{\Leftrightarrow}\; \bigl(\forall j \in [k],\; f(\{j\}) \subset g(\{j\})\bigr).$$
Notice that, whenever $f^{+i}$ is defined, $f \subset f^{+i}$.
It follows that
$f \leq g \Rightarrow f \subset g$ for every $f$ and $g$ in $\Hom_{\Gamma '}([k],[k'])$, whence $(\Hom_{\Gamma'}([k],[k']),\leq)$ is a poset
(its maximal elements are the onto morphisms). The opposite order relation will be denoted by $\geq$.

\begin{Rem}
Notice that $\leq$ is strictly stronger than $\subset$. For example, for
$f : [0] \Rightarrow [3]$ which maps $\{0\}$ to $\{0,4\}$,
and $g : [0] \Rightarrow [4]$ which maps $\{0\}$ to $\{0,2,4\}$, one obviously
has $f \subset g$ whilst the statement $f \leq g$ is false (notice that $f \subset g$ is an irreducible chain for $\subset$
whereas no $i$ satisfies $g=f^{+i}$).
\end{Rem}

\subsubsection{The definition of $U_\epsilon$}\label{uepsdef}

Let $\epsilon \in \,\left]0,1\right[$ and $k \in \mathbb{N}$. Set then
$$U_{k,\epsilon}:=\underset{f:[n] \Rightarrow [k]}{\bigcup}
\biggl(U(f) \times \underset{g \geq f}{\bigcap}W(g,\epsilon)\biggr)$$
which is clearly an open subset of $A_k \times \Delta^k$.
Set also $$U'_{k,\epsilon}:=\underset{f:[n] \Rightarrow [k]}{\bigcup}
U(f) \times \overline{W(f,\epsilon)}$$
and notice that $U_{k,\epsilon} \subset U'_{k,\epsilon}$. \\
For an arbitrary $\epsilon \in \,\left]0,1\right[$, we finally define:
$$U_\epsilon:=\underset{k \in \N}{\bigcup} \pi
_k(U_{k,\epsilon}) \subset |A|.$$
For every $\epsilon \in ]0,1[$, one has $\alpha \in
W(\id_{[m]},\epsilon)$ and $x \in U(\id_{[m]})$, whence $(x,\alpha) \in
U_{m,\epsilon}$ since $\id_{[m]}$ is maximal. This shows that $[(x,\alpha)] \in U_\epsilon$.
In the next section, we will show that $U_\epsilon$ is an open subset of $|A|$
by proving that the family $(U_{k,\epsilon})_{k \in \N}$ is \emph{compatible}.

\section{The proof that $U_\epsilon$ is an open subset of $|A|$}\label{opensec}

Here, we will prove the following proposition:

\begin{prop}\label{compatibility}
Let $\epsilon \in \,]0,1[$. Then,
\begin{enumerate}[(a)]
\item For every $k \in \mathbb{N}^*$,
$(y, \beta) \in A_k \times \Delta ^{k-1}$ and $i \in [k]$:
$$ (y,(\delta _i)_*(\beta)) \in U_{k,\epsilon} \Leftrightarrow
(d_i(y),\beta) \in U_{k-1,\epsilon}.$$
\item For every $k \in \mathbb{N}$, $(x', \alpha ') \in A_k \times \Delta ^{k+1}$ and $i \in [k]$:
$$(y,(\sigma _i)_*(\beta)) \in U_{k,\epsilon} \Leftrightarrow
(s_i(y),\beta) \in U_{k+1,\epsilon}.$$
\end{enumerate}
\end{prop}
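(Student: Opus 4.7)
The strategy is to verify both directions of (a) and (b) via a canonical correspondence between the morphisms $f : [n] \Rightarrow [k]$ indexing the $U_{k,\epsilon}$-union on one side and morphisms $f' : [n] \Rightarrow [k\pm 1]$ on the other. For case (a) set $f' := \delta_i^{-1}(f)$ (defined by $f'(\{j\}) := \delta_i^{-1}(f(\{j\}))$; well-defined because the vanishing of $(\delta_i)_*(\beta)$ at coordinate $i$ forces any contributing $f$ to satisfy $f(\{j\}) \neq \{i\}$), and for case (b) set $f' := \sigma_i^{-1}(f)$. The verification then splits into (i) matching the $U(\cdot)$-conditions on $y$ and (ii) matching the $\bigcap W(\cdot,\epsilon)$-conditions on the simplex.

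For part (i), the identifications follow from Proposition \ref{uproperties}. In case (a), unpacking $\sup(f) = \delta_i \circ \sup(f')$ when $i \notin f([n])$, or factoring through $\tilde f := f \setminus \{i\}$ when $i \in f([n])$, yields via (c) applied to $\delta_i$ the inclusion $U(f) \subset d_i^{-1}(U(f'))$, with equality in the first case. In case (b), the simplicial identity $d_i s_i = \id$, together with (c) applied to a section of $\sigma_i$ and (d) applied to $\sigma_i$, yields the sharper equivalence $y \in U(f) \iff s_i(y) \in U(f')$. For part (ii), the pointwise $W$-identities $(\delta_i)_*(\beta) \in W(f,\epsilon) \iff \beta \in W(\delta_i^{-1}(f),\epsilon)$ and $(\sigma_i)_*(\beta) \in W(f,\epsilon) \iff \beta \in W(\sigma_i^{-1}(f),\epsilon)$ are direct coordinate computations: the zero coordinate of $(\delta_i)_*(\beta)$ at position $i$ removes it from every sum, while $(\sigma_i)_*(\beta)_i = \beta_i + \beta_{i+1}$ matches exactly the coupling of positions $i$ and $i+1$ by $\sigma_i^{-1}$.

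The main obstacle is lifting these pointwise identities to the intersections over upsets. For case (a), one verifies that any chain of $^+$-operations from $f'$ to $g'$ in $\Hom_{\Gamma'}([n], [k-1])$ lifts to a $^+$-chain from $f$ to some $g$ in $\Hom_{\Gamma'}([n], [k])$ (with one step doubled when it crosses position $i$), and conversely that any $g \geq f$ restricts appropriately. For case (b), the upset of $f' = \sigma_i^{-1}(f)$ can contain ``split'' morphisms $g'$, having $i \in g'(\{j_a\})$ and $i+1 \in g'(\{j_b\})$ with $j_a \neq j_b$, that do not arise as $\sigma_i^{-1}$ of anything; the additional $W(g',\epsilon)$-constraints they impose on $\beta$ are shown to be implied by the constraints from the in-correspondence morphisms $\sigma_i^{-1}(g)$ (with $g \geq f$), via ratio-interpolation inequalities of the form $t_3/(t_0+t_1+t_2) \leq (t_2+t_3)/(t_0+t_1) \leq (t_1+t_2+t_3)/t_0$ together with the convexity of the intervals $I_{i,j}$. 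For the $\Leftarrow$ direction of (b) one must also observe that split $f'$ cannot actually occur as a witness: the factorization $\sigma_i \circ \sup(f') = \sup(f) \circ \tau$ with $\tau$ a degeneracy, combined with axiom (ii) of the $x$-admissible family, forces $s_i(y) \notin U(f')$. Verifying these liftings and the absorption of the split constraints systematically across all positions is the technical heart of the proof.
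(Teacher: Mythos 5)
Your proposal follows essentially the same route as the paper: your $\delta_i^{-1}(f)$ and $\sigma_i^{-1}(f)$ are precisely the paper's $f_{-i}$ and $(\delta_i)_*(f)^{+i}$ (resp.\ $(\delta_i)_*(f)$ when $i \notin f([n])$), the upset-lifting you flag as the technical heart is exactly the content of the paper's Lemmas \ref{admitted1}, \ref{admitted2} and \ref{admitted3}, and your use of admissibility axiom (ii) to exclude split witnesses in the converse of (b) is the paper's appeal to Proposition \ref{uproperties}(e). The only divergence is cosmetic: where you absorb the constraints coming from split morphisms via mediant inequalities on the ratios, the paper instead writes $\beta$ as a convex combination of $(\sigma_i \circ \delta_i)_*(\beta)$ and $(\sigma_i \circ \delta_{i+1})_*(\beta)$ and invokes the convexity of the sets $W(g,\epsilon)$ together with two applications of part (a) --- an equivalent mechanism.
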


This has the following immediate corollary as explained in the introduction of Section \ref{gencon}:

\begin{cor}
For every $\epsilon \in \,\left]0,1\right[$, the subset $U_\epsilon$ is open in
$|A|$ and $\forall k \in \N,\; U_{k,\epsilon}=\pi_k^{-1}(U_\epsilon)$.
\end{cor}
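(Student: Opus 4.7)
The plan is to deduce this corollary as a formal consequence of Proposition~\ref{compatibility} together with the defining property of the quotient topology on $|A|$. By Proposition~\ref{compatibility}, the family $(U_{k,\epsilon})_{k \in \N}$ satisfies the two sets of conditions \eqref{faceprop} listed in the remark at the beginning of Section~\ref{gencon}; the remark in turn asserts that these conditions are equivalent to $(U_{k,\epsilon})_{k}$ being compatible in the sense of the displayed biconditional at the start of Section~\ref{gencon}. So the family is compatible: for every morphism $f:[k]\to[k']$ in $\Delta$ and every $(y,\beta)\in A_{k'}\times\Delta^k$, one has $(y,f_*(\beta))\in U_{k',\epsilon}$ iff $(f^*(y),\beta)\in U_{k,\epsilon}$.

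I would then establish the set-theoretic identity $\pi_k^{-1}(U_\epsilon)=U_{k,\epsilon}$ for every $k\in\N$. The inclusion $U_{k,\epsilon}\subset\pi_k^{-1}(U_\epsilon)$ is immediate from the definition $U_\epsilon=\bigcup_\ell \pi_\ell(U_{\ell,\epsilon})$. For the converse, given $(y,\beta)\in\pi_k^{-1}(U_\epsilon)$, one has $\pi_k(y,\beta)=\pi_\ell(y',\beta')$ for some $\ell\in\N$ and some $(y',\beta')\in U_{\ell,\epsilon}$. Unwinding the construction of $|A|$ as the quotient of $\coprod_n A_n\times\Delta^n$ under the equivalence relation generated by the elementary relations $(z,\Delta^*(\delta)[w])\sim (A(\delta)[z],w)$, this means that $(y,\beta)$ and $(y',\beta')$ are connected by a finite zigzag of such elementary relations. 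Each elementary step is precisely the compatibility biconditional applied to the corresponding morphism $\delta$ of $\Delta$, so membership in $(U_{k,\epsilon})_k$ is preserved across every edge of the zigzag, and an induction on its length yields $(y,\beta)\in U_{k,\epsilon}$.

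Openness of $U_\epsilon$ in $|A|$ is then automatic: by the universal property of the quotient topology, a subset of $|A|$ is open iff its preimage under each $\pi_k$ is open in $A_k\times\Delta^k$; by the identity just obtained, these preimages are the $U_{k,\epsilon}$, which are open in $A_k \times \Delta^k$ by their very construction in \ref{uepsdef} (as a union of products of open $U(f)$'s with open subsets of $\Delta^k$). The only point calling for care is the induction on the zigzag length in the second paragraph: one must track the fact that the intermediate vertices live in $A_{p_i}\times\Delta^{p_i}$ for dimensions $p_i$ that vary along the chain. But since the elementary relation is by definition a single application of some $A(\delta)$ on one factor and $\Delta^*(\delta)$ on the other, each link is handled by exactly one invocation of the compatibility biconditional; no further combinatorial bookkeeping is needed, and the induction is routine.
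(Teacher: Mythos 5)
Your proposal is correct and follows essentially the same route as the paper, which simply declares the corollary an immediate consequence of Proposition~\ref{compatibility} via the remark on compatible families at the start of Section~\ref{gencon}. You merely make explicit the two routine steps the paper leaves implicit (the zigzag argument showing that compatibility forces $\pi_k^{-1}(U_\epsilon)=U_{k,\epsilon}$, and the appeal to the quotient topology for openness), and both are carried out correctly.
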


\subsection{One last notation}\label{fmoinsi}

Given $f : [k] \Rightarrow [k']$ and $i \in [k'-1]$
such that $\card_f(i) \neq 1$, we define
$f_{-i}:[k] \Rightarrow [k'-1] $ by
$$\forall j \in [k], \; f_{-i}(\{j\})=\sigma _i(f(\{j\}) \setminus \{i\}).$$
If $\card_f(k') \neq 1$, we define $f_{-k'}:[k] \Rightarrow [k'-1]$ by
$$\forall j \in [k], \; f_{-k'}(\{j\})=f(\{j\}) \setminus \{k'\}.$$
Obviously $(\delta _i)^*(f_{-i})=f$ when $\card_f(i)=0$. Furthermore, if $f \leq g$ and $f_{-i}$ is
defined, then $g_{-i}$ is defined. The following results are then straightforward:

\begin{lemme}\label{admitted1}
Let $f:[k] \Rightarrow [k']$ and $g : [k] \Rightarrow [k']$ together with some $i \in [k']$ such that $\card_f(i) \neq 1$.
Then, $$f \leq g \,\Rightarrow\, f_{-i} \leq g_{-i}.$$
\end{lemme}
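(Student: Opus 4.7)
The plan is to use the definition of $\leq$ as the reflexive-transitive closure of $\mathcal{R}$. Writing out a witness $f = f_0 \,\mathcal{R}\, f_1 \,\mathcal{R}\, \cdots \,\mathcal{R}\, f_r = g$ and inducting on $r$, it suffices to treat the one-step case $g = f^{+j}$ for some $j$ at which $f^{+j}$ is defined. For the induction to go through, I first need the observation already announced in the text: if $\card_f(i) \neq 1$ and $f \,\mathcal{R}\, f'$, then $\card_{f'}(i) \neq 1$, so that all intermediate $(f_s)_{-i}$ are defined. This is immediate from the explicit formulas of Cases A and B in the definition of $f^{+j}$; in both, one finds $\card_{f^{+j}}(j) = \card_{f^{+j}}(j+1) \geq 2$, while $\card_{f^{+j}}(i) = \card_f(i)$ for $i \notin \{j,j+1\}$.

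Assume now $g = f^{+j}$. I would prove $f_{-i} \leq g_{-i}$ by analysing the position of $i$ relative to $\{j, j+1\}$. If $i \notin \{j, j+1\}$, the operations $^{+j}$ (which only touches a cluster at position $j$ or $j+1$) and $_{-i}$ (which applies $\sigma_i$ and removes $i$) act on disjoint positions of the target $[k']$. A straightforward cluster-by-cluster verification, starting from the formula $f_{-i}\{\ell\} = \sigma_i(f\{\ell\} \setminus \{i\})$, then yields the commutation $(f^{+j})_{-i} = (f_{-i})^{+j'}$, with $j' = j-1$ when $i < j$ and $j' = j$ when $i > j+1$, the shift being induced by the deletion of $i$. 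In particular $f_{-i} \,\mathcal{R}\, g_{-i}$, whence $f_{-i} \leq g_{-i}$.

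If $i \in \{j, j+1\}$, the operations interact on the single cluster $\ell_0$ modified by $f^{+j}$, and the analysis is more delicate. I would split further into the four sub-cases obtained from (Case A vs Case B) combined with ($i = j$ vs $i = j+1$), and in each one compute $f_{-i}\{\ell_0\}$ and $(f^{+j})_{-i}\{\ell_0\}$ explicitly, using $\sigma_i(i) = \sigma_i(i+1) = i$ together with the unit shift of $\sigma_i$ above $i$. In each situation, one identifies $(f^{+j})_{-i}$ either as $f_{-i}$ itself (so the conclusion is trivial) or as $(f_{-i})^{+j''}$ for an appropriate $j''$ (so a single $\mathcal{R}$-move produces the required relation).

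The main obstacle is precisely this coincident case $i \in \{j, j+1\}$: one must keep track of which cluster contains $i$ and extract the correct index $j''$ reflecting both the attachment of $i$ or $i+1$ in $f^{+j}$ and the $\sigma_i$-induced collapse at $\{i, i+1\}$. By contrast, the reduction to a one-step chain and the disjoint-position case (where the two operations commute formally) are quite routine, being immediate consequences of the local nature of $^{+j}$ and $_{-i}$.
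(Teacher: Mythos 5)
Your reduction to a single $\mathcal{R}$-step and your commutation formula $(f^{+j})_{-i}=(f_{-i})^{+j'}$ in the case $i\notin\{j,j+1\}$ are correct (the paper itself offers no argument, declaring the lemma straightforward, so there is nothing to compare against). The gap is in the coincident case: it is \emph{not} true that $(f^{+j})_{-i}$ always equals $f_{-i}$ or $(f_{-i})^{+j''}$ for a single index $j''$. Take $f:[0]\Rightarrow [5]$ with $f(\{0\})=\{0,4\}$, $j=3$ and $i=4$. Then $\card_f(3)=0$ and $\card_f(4)=2$, so $f^{+3}$ is defined with $f^{+3}(\{0\})=\{0,3,4\}$, and $\card_f(4)=2\neq 1$, so the deletions are defined, with $f_{-4}(\{0\})=\sigma_4(\{0\})=\{0\}$ and $(f^{+3})_{-4}(\{0\})=\sigma_4(\{0,3\})=\{0,3\}$. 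But the only $\mathcal{R}$-move available from $\{0\}\mapsto\{0\}$ in $\Hom_{\Gamma'}([0],[4])$ is $^{+0}$, producing $\{0,1\}$; since $h\leq h'$ forces $h\subset h'$, no chain can reach $\{0,3\}$. So your case analysis cannot be completed here — and in fact, since $f\,\mathcal{R}\,f^{+3}$ gives $f\leq f^{+3}$, this is a counterexample to the lemma exactly as stated. The obstruction is that $i$ sits isolated in its cluster (neither $i-1$ nor $i+1$ belongs to it), so after deleting $i$ and collapsing by $\sigma_i$ the newly attached element $j$ is stranded away from the rest of the cluster and is no longer reachable by adjacent attachments.

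The statement is salvageable, and suffices for every use made of it in the paper, under the extra hypothesis that either $\card_f(i)=0$ or $i$ and $i+1$ lie in a common cluster of $f$; both hypotheses guarantee that along any chain $f=f_0\,\mathcal{R}\,\cdots\,\mathcal{R}\,f_r=g$ the connected component of $i$ inside its $f_s$-cluster, whenever non-empty, is never reduced to $\{i\}$, and under that invariant your four-way sub-case analysis does go through. But note that even with the corrected hypothesis your induction does not close as written, because intermediate terms $f_s$ can have $\card_{f_s}(i)\geq 2$, so you must carry this component invariant along the chain rather than only the condition $\card_{f_s}(i)\neq 1$. A cleaner route is to bypass the step-by-step induction entirely: one checks that $f\leq g$ is equivalent to ``$f\subset g$ and every connected component of every $g(\{l\})$ meets $f(\{l\})$'', and then verifies this criterion directly for the pair $(f_{-i},g_{-i})$ under the corrected hypothesis.
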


\begin{lemme}\label{admitted2}
Let $f:[k] \Rightarrow [k']$ and $g:[k] \Rightarrow [k'-1]$ together with some
$i \in [k']$ such that $\card_f(i) \neq 1$.
Assume that $f_{-i} \leq g$.
\begin{itemize}
\item If $\card_g(i)=\card_g(i-1)=0$, then
$f \leq (\delta_i)_*(g)$.
\item If $\card_g(i)> 0$ and $\card_g(i-1)=0$, then
$f \leq (\delta_i)_*(g)^{+i}$.
\item If $\card_g(i)= 0$ and $\card_g(i-1)>0$, then
$f \leq (\delta_i)_*(g)^{+(i-1)}$.
\item If $\card_g(i)> 0$ and $\card_g(i-1)>0$, then
$f \leq (\delta_i)_*(g)^{+(i-1)}$ or $f \leq (\delta_i)_*(g)^{+i}$.
\end{itemize}
\end{lemme}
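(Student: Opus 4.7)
The plan is to proceed by induction on the length $\ell$ of a chain of $\mathcal{R}$-moves $f_{-i}=h_0\,\mathcal{R}\,h_1\,\mathcal{R}\,\cdots\,\mathcal{R}\,h_\ell=g$ witnessing $f_{-i}\leq g$; this reduces the lemma to a base case and a single-step induction.

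For the base case $\ell=0$ (so $g=f_{-i}$), I would compute $(\delta_i)_*(f_{-i})$ explicitly using the identity $\delta_i\circ\sigma_i=\id_{[k'-1]}$. This gives $(\delta_i)_*(f_{-i})(\{j\})=f(\{j\})$ for every $j$ with $i\notin f(\{j\})$, and $(\delta_i)_*(f_{-i})(\{j_0\})=f(\{j_0\})\setminus\{i\}$ for the unique $j_0$ with $i\in f(\{j_0\})$ when $\card_f(i)\geq 2$. Since the sets $f(\{j\})$ are pairwise linearly separated in $[k']$, the conditions $\card_g(i-1)>0$ and $\card_g(i)>0$ are equivalent respectively to $i-1\in f(\{j_0\})$ and $i+1\in f(\{j_0\})$, so the four cases of the lemma correspond exactly to the four possible configurations of $i\pm 1$ inside $f(\{j_0\})$; in each of them, re-attaching $\{i\}$ to $f(\{j_0\})\setminus\{i\}$ is precisely the $+i$- or $+(i-1)$-operation prescribed by the lemma (and when $\card_f(i)=0$ one simply has $f=(\delta_i)_*(g)$).

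For the inductive step, factor the chain as $f_{-i}\leq g'\,\mathcal{R}\,g$ with $g=(g')^{+j}$. The inductive hypothesis supplies some $\tilde g'$ of the prescribed form for $g'$ with $f\leq\tilde g'$, so it suffices to exhibit $\tilde g$ of the prescribed form for $g$ with $\tilde g'\leq\tilde g$, after which transitivity yields $f\leq\tilde g$. This amounts to lifting the downstairs $+j$-move through the insertion $\delta_i:[k'-1]\hookrightarrow[k']$: when $j\neq i-1$ downstairs (equivalently, the move avoids the hole at position $i$ upstairs), the lift is a formal $+j'$-move with $j'$ the image of $j$ under $\delta_i$, and the form of the lift is preserved; when $j=i-1$, the move interacts with the hole at $i$ upstairs and one must re-identify which of the three prescribed forms describes $\tilde g$.

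The main obstacle will be exactly this latter interaction case: when $j=i-1$, the $+j$-move downstairs either fills the hole at $i$ upstairs or shifts which case of the lemma applies for $g$ versus $g'$, and one must verify that the corresponding upstairs lift takes the correct form and that $\tilde g'\leq\tilde g$ can be realised by a single $+$-move (or is an identity). A secondary subtlety is tracking how $\card_{g'}(i-1),\card_{g'}(i),\card_g(i-1),\card_g(i)$ together govern the case split on each side, which is dual to the monotonicity recorded in Lemma~\ref{admitted1}; once this bookkeeping is laid out, the rest of the argument is routine combinatorial case analysis inside the poset $\bigl(\Hom_{\Gamma'}([k],[k']),\leq\bigr)$.
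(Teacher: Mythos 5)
The paper offers no proof of this lemma to compare against: it is introduced with the single sentence ``The following results are then straightforward'', so you are on your own here. Your plan (induction on the length of a chain of $\mathcal{R}$-moves, with an explicit computation of $(\delta_i)_*(f_{-i})$ in the base case) is the natural one, and the computation $(\delta_i)_*(f_{-i})(\{j\})=f(\{j\})\setminus\{i\}$ is correct (modulo the typo $\delta_i\circ\sigma_i=\id_{[k'-1]}$, which should read $\sigma_i\circ\delta_i=\id_{[k'-1]}$; what you actually use is that $\delta_i\circ\sigma_i$ fixes every point of $[k']\setminus\{i\}$). The gap is in the sentence identifying the four cases: the conditions $\card_{f_{-i}}(i-1)>0$ and $\card_{f_{-i}}(i)>0$ are equivalent to $i-1\in f([k])$ and $i+1\in f([k])$ respectively, \emph{not} to $i-1\in f(\{j_0\})$ and $i+1\in f(\{j_0\})$. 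The two readings differ because a block $f(\{j\})$ of a $\Gamma'$-morphism need not be an interval (only the blocks of \emph{onto} morphisms are forced to be intervals), so the block $f(\{j_0\})$ containing $i$ may contain neither $i-1$ nor $i+1$ even though $\card_f(i)\geq 2$.

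This is not a repairable slip: the base case $g=f_{-i}$ of your induction, and hence the lemma as stated, is false. Take $k=0$, $k'=3$, $i=1$, and let $f:[0]\Rightarrow[3]$ be given by $f(\{0\})=\{1,3\}$; this is a legitimate morphism of $\Gamma'$, namely $f=\delta_*(g_0)$ with $g_0:[0]\Rightarrow[1]$ onto and $\delta:[1]\hookrightarrow[3]$ sending $0\mapsto 1$ and $1\mapsto 3$. Then $\card_f(1)=2\neq 1$ and $f_{-1}(\{0\})=\sigma_1(\{1,3\}\setminus\{1\})=\sigma_1(\{3\})=\{2\}$. Taking $g=f_{-1}$, one has $\card_g(1)=\card_g(0)=0$, so the first bullet would assert $f\leq (\delta_1)_*(g)$, i.e.\ $f\leq h$ where $h(\{0\})=\{3\}$. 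But $f\leq h$ implies $f\subset h$, and $\{1,3\}\not\subset\{3\}$. The same mechanism defeats every bullet: each of $(\delta_i)_*(g)$, $(\delta_i)_*(g)^{+i}$ and $(\delta_i)_*(g)^{+(i-1)}$ either omits $i$ entirely or attaches it to the block of $i+1$ or of $i-1$, whereas in $f$ the element $i$ may sit in a block containing neither neighbour. Your argument (and the lemma) would go through under the additional hypothesis that every block $f(\{j\})$ is an interval, but no such hypothesis appears in the statement, and the paper invokes the lemma for arbitrary $f:[n]\Rightarrow[k]$ in the proof of Proposition \ref{compatibility}; as it stands, the statement you were asked to prove admits counterexamples.
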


\begin{lemme}\label{admitted3}
Let $f: [k] \Rightarrow [k'-1]$ and $i \in [k'-1]$ be such that
$\card_f(i) >0$. Let $g \geq (\delta_i)_*(f)^{+i}$. Then, $g_{-i} \geq f$.
\end{lemme}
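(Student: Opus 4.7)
The plan is to reduce Lemma \ref{admitted3} to Lemma \ref{admitted1} applied to the pair $(\delta_i)_*(f)^{+i} \leq g$ at the same index $i$. For this I need two things: first that the operation $(\cdot)_{-i}$ makes sense on $(\delta_i)_*(f)^{+i}$, and second that applying it recovers $f$ exactly. Once both are in place, Lemma \ref{admitted1} immediately yields $f = \bigl((\delta_i)_*(f)^{+i}\bigr)_{-i} \leq g_{-i}$, which is the desired conclusion.

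For the first point, I would argue that since $\delta_i$ skips the value $i$, one has $\card_{(\delta_i)_*(f)}(i)=0$, while the hypothesis $\card_f(i)>0$ forces $\card_{(\delta_i)_*(f)}(i+1) \geq 1$ (any $j$ with $i \in f(\{j\})$ gives $i+1 \in \delta_i(f(\{j\}))$). This makes $(\delta_i)_*(f)^{+i}$ well-defined via the second branch of its definition, and by construction the block containing $i$ also contains $i+1$, so $\card_{(\delta_i)_*(f)^{+i}}(i) \geq 2$ and the cardinality hypothesis of Lemma \ref{admitted1} is met.

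For the second point, I would compute block by block: fix $j \in [k]$ and separate the cases $i \in f(\{j\})$ and $i \notin f(\{j\})$. In the first case, $(\delta_i)_*(f)^{+i}(\{j\}) = \delta_i(f(\{j\})) \cup \{i\}$, so removing $i$ leaves $\delta_i(f(\{j\}))$, and the simplicial identity $\sigma_i \circ \delta_i = \id_{[k'-1]}$ gives back $f(\{j\})$; in the second case, $(\delta_i)_*(f)^{+i}(\{j\}) = \delta_i(f(\{j\}))$ already avoids $i$, and the same identity concludes. The only real obstacle is bookkeeping: one must pick the correct branches in the piecewise definitions of $(\cdot)^{+i}$ and $(\cdot)_{-i}$, and verify that the element $i$ artificially added by the first operation is precisely the one the second removes before pulling back via $\sigma_i$.
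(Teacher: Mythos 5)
Your proof is correct and supplies exactly the verification the paper omits (Lemma \ref{admitted3} is stated there as ``straightforward'' with no proof): the identity $\bigl((\delta_i)_*(f)^{+i}\bigr)_{-i}=f$, together with $\card_{(\delta_i)_*(f)^{+i}}(i)\geq 2$, reduces the claim to Lemma \ref{admitted1} applied to $(\delta_i)_*(f)^{+i}\leq g$. One cosmetic slip: since $\card_{(\delta_i)_*(f)}(i)=0$ and $\card_{(\delta_i)_*(f)}(i+1)\geq 1$, it is the \emph{first} case of the definition of $(\cdot)^{+i}$ (adjoining $i$ to the block containing $i+1$) that applies, not the second; your block-by-block computation already uses the correct one.
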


\subsection{Proof of statement (a) in Proposition \ref{compatibility}}

We fix an arbitrary pair $(y, \beta) \in A_k \times \Delta^{k-1}$ and an arbitrary integer $i \in [k]$.

Assume first that there exists some $f:[n] \Rightarrow [k]$
such that $$\bigl(y,(\delta_i)_*(\beta)\bigr)\in U(f) \times \underset{g\geq
  f}{\bigcap} W(g,\epsilon).$$
Hence, $(\delta_i)_*(\beta) \in W(f,\epsilon)$ which yields $\card_f(i) \neq 1$.
The rest of the proof essentially rests upon the following claim:
$$(d_i(y),\beta) \in U(f_{-i}) \times \underset{g \geq f_{-i}}{\bigcap} W(g,\epsilon).$$
\begin{itemize}
\item Firstly, we show that $d_i(U(f)) \subset U(f_{-i})$. Let $y \in U(f)$.
\begin{itemize}
\item
Assume that $\card_f(i)=0$. Obviously $\red{f_{-i}}=\red{f}$ and
$\delta _i \circ \sup(f_{-i})=\sup(f)$, whence $\sup(f_{-i})^*(d_i(y)) \in U(\red(f_{-i}))$ i.e.\ $d_i(y) \in U(f_{-i})$.
Notice conversely that, for an arbitrary $z$, the condition $d_i(z) \in U(f_{-i})$ implies $z \in U(f)$.
\item
Assume that $\card_f(i)\geq 2$. Denote respectively by $\sigma : [k'] \twoheadrightarrow
[n]$ and $\sigma ': [k'-1] \twoheadrightarrow [n]$ the epimorphisms corresponding to $\red(f)$ and $\red(f_{-i})$.
Then, there is some $j$ such that $\sigma \circ \delta _j=\sigma '$ and the square
$$\begin{CD}
[k'-1] @>{\sup(f_{-i})}>> [k-1] \\
@V{\delta _j}VV @V{\delta _i}VV \\
[k'] @>{\sup(f)}>> [k]
\end{CD}$$ is commutative.
However $d_j(\sup(f)^*(y)) \in U(\sigma ')$ since $\sup(f)^*(y) \in U(\sigma)$ and $\delta _j \in I_{\sigma}$ (cf.\ statement (c) in Proposition \ref{uproperties}). It follows that $\sup(f_{-i})^*(d_i(y)) \in U(\sigma ')$, whence $d_i(y) \in U(f_{-i})$.
\end{itemize}
\item Let now $g \geq f_{-i}$. We wish to prove that $\beta \in W(g,\epsilon)$.
\begin{itemize}
\item If $\card_g(i)=\card_g(i-1)=0$,  then the definition of the $W(h,\eta)$'s
obviously yield that the condition $(\delta _i)_*(\beta) \in W((\delta _i)_*(g),\epsilon)$ implies $\beta \in W(g,\epsilon)$, whilst Lemma \ref{admitted1} shows that $f\leq (\delta _i)_*(g)$.
\item If $\card_g(i) >0$ and $\card_g(i-1)=0$.
Then again, $(\delta _i)_*(\beta)\in W((\delta _i)_*(g)^{+i},\epsilon)$ clearly
implies $\beta \in W(g,\epsilon)$, whilst Lemma \ref{admitted2} shows that $f \leq (\delta _i)_*(g)^{+i}$.
\item If $\card_g(i)=0$ and $\card_g(i-1) >0$, then $(\delta _i)_*(\beta)
\in W((\delta _i)_*(g)^{+(i-1)},\epsilon)$ clearly implies $\beta \in W(g,\epsilon)$, whilst
Lemma \ref{admitted2} shows that $f \leq (\delta _i)_*(g)^{+(i-1)}$.
\item Either $\card _g(i-1)>0$ and $\card_g(i)>0$.
Then, both conditions $(\delta _i)_*(\beta)\in W((\delta _i)_*(g)^{+i},\epsilon)$
and $(\delta _i)_*(\beta) \in W((\delta _i)_*(g)^{+(i-1)},\epsilon)$ clearly imply that
$\beta \in W(g,\epsilon)$, whilst one has $f \leq (\delta_i)_*(g)^{+(i-1)}$ or $f \leq (\delta_i)_*(g)^{+i}$.
\end{itemize}
In any case, this shows that $\beta \in W(g,\epsilon)$, whence $\beta \in \underset{g \geq f_{-i}}{\bigcap} W(g,\epsilon)$.
\end{itemize}
Conversely, assume $\bigl(d_i(y),\beta\bigr)\in U(f) \times
\underset{g \geq f}{\bigcap} W(g,\epsilon)$ for some $f:[n] \Rightarrow [k-1]$.
It was proven earlier that if $\card_g(i)=0$, then, for every $z$, one has $z \in
U(f)$ if and only if $d_i(z) \in  U(f_{-i})$. Applying this to
$(\delta_i)_*(f)$ yields $y \in U((\delta _i)_*(f))$ since $(\delta _i)_*(f)_{-i}=f$.
On the other hand, for any $g \geq (\delta_i)_*(f)$, Lemma \ref{admitted1} shows that $g_{-i} \geq (\delta_i)_*(f)_{-i}=f$, 
whence $\beta \in W(g_{-i},\epsilon)$ and then
$(\delta_i)_*(\beta) \in W(g,\epsilon)$ by using the definition of the $W(h,\eta)$'s. We deduce that
$$\bigl(y,(\delta_i)_*(\beta)\bigr) \in U((\delta_i)_*(f)) \times
\underset{g \geq (\delta_i)_*(f)}{\bigcap} W(g,\epsilon),$$ which finishes
the proof of point (a) in Proposition \ref{compatibility}.

\begin{Rem}\label{compatadher}
A similar strategy of proof shows that for every $k\in \N^*$,
$\epsilon \in \,\left]0,1\right[$, $i \in [k]$
and $(y,\beta) \in A_k \times \Delta ^{k-1}$:
$$\bigl(y,(\delta _i)_*(\beta)\bigr) \in U'_{k,\epsilon} \,\Leftrightarrow\,
\bigl(d_i(y),\beta\bigr) \in U'_{k-1,\epsilon.}$$
\end{Rem}

\subsection{Proof of statement (b) in Proposition \ref{compatibility}}

Let $(y,\beta) \in A_k \times \Delta^{k+1}$, and $i \in [k]$.
\begin{itemize}
\item Assume that $(y,(\sigma _i)_*(\beta)) \in U(f) \times
\underset{g \geq f}{\bigcap}W(g,\epsilon)$ for some $f:[n] \Rightarrow [k]$.
\begin{itemize}
\item Assume that $\card_f(i)=0$.
We may write $\bigl(d_{i+1}(s_i(y)),(\sigma _i)_*(\beta)\bigr) \in U(f) \times
\underset{g \geq f}{\bigcap}W(g,\epsilon)$
and $(d_{i}(s_i(y)),(\sigma _i)_*(\beta)) \in U(f) \times
\underset{g \geq f}{\bigcap}W(g,\epsilon)$.
Statement (a) then yields
$(s_i(y),(\sigma _i \circ \delta _{i+1})_*(\beta))
\in U((\delta _i)_*(f)) \times
\underset{g \geq (\delta _i)_*(f)}{\bigcap}W(g,\epsilon)$
and $(s_i(y),(\sigma _i \circ \delta _i)_*(\beta))
\in U((\delta _{i+1})_*(f)) \times
\underset{g \geq (\delta _{i+1})_*(f)}{\bigcap}W(g,\epsilon)$.
However $(\delta _i)_*(f)=(\delta _{i+1})_*(f)$ since
$\card_f(i)=0$. Thus $s_i(y) \in U((\delta _i)_*(f))$,
$(\sigma _i \circ \delta _{i+1})_*(\beta) \in \underset{g \geq (\delta
  _i)_*(f)}{\bigcap}W(g,\epsilon)$ and
$(\sigma _i \circ \delta _i)_*(\beta) \in \underset{g \geq (\delta
  _i)_*(f)}{\bigcap}W(g,\epsilon)$.
However $\beta \in \left[(\sigma _i \circ \delta _i)_*(\beta),(\sigma _i \circ
\delta _{i+1})_*(\beta)\right]$ and every $W(g,\epsilon)$ is convex.
It follows that $\beta \in \underset{g \geq (\delta_i)_*(f)}{\bigcap}W(g,\epsilon)$ and finally
$(s_i(y),\beta) \in U((\delta _i)_*(f)) \times
\underset{g \geq (\delta _i)_*(f)}{\bigcap}W(g,\epsilon)$.
\item Assume that $\card_f(i) \geq 1$. Set then $f':=(\delta_i)_*(f)^{+i}$ and let $g \geq f'$.
Then, Lemma \ref{admitted3} shows that $g_{-i} \geq f$. From
$(\sigma _i)_*(\beta) \in W(g_{-i},\epsilon)$ immediately follows $\beta \in W(g,\epsilon)$ by using the definition of the
$W(h,\eta)$'s.
Furthermore, should we let $\sigma$ and $\sigma'$ denote the epimorphisms which respectively correspond to $\red(f)$ and $\red(f')$,
then there exists some $j$ such that $\sigma \circ \sigma _j =\sigma'$ and the square
$$\begin{CD}
[k'+1] @>{\sup(f')}>> [k+1] \\
@V{\sigma _j}VV @V{\sigma _i}VV \\
[k'] @>{\sup(f)}>> [k]
\end{CD}$$
is commutative. Since $\sup(f)^*(y) \in U(\red(f))$,
one has $s_j(\sup(f)^*(y)) \in U(\red(f)\circ \sigma _j)$, whence
$\sup(f')^*(s_i(y)) \in U(\red(f'))$.
Therefore, $(s_i(y),\beta) \in U(f') \times \underset{g \geq f'}{\bigcap}W(g,\epsilon)$.
\end{itemize}

\item Conversely, assume that $(s_i(y),\beta) \in U(f) \times \underset{g \leq f}{\bigcap}W(g,\epsilon)$ for some
$f:[n]\Rightarrow [k+1]$.
\begin{itemize}
\item Assume that $i \not\in f([n])$ and $i+1 \not\in f([n])$.
Then, $\card _f(i)=0$, and hence $y=d_i(s_i(y)) \in U(f_{-i})$ by the proof of statement (a).
Let $g \geq f_{-i}$. If $\card_g(i)=0$, then $(\delta _i)_*(g) \geq f$, 
whence $\beta \in W((\delta _i)_*(g),\epsilon)$ yields $(\sigma _i)_*(\beta) \in W(g,\epsilon)$.
If $\card_g(i)\geq 1$ , then $(\delta _i)_*(g)^{+i} \geq f$
and $\beta \in W((\delta _i)_*(g)^{+i},\epsilon)$ yields
$(\sigma _i)_*(\beta) \in W(g,\epsilon)$. In any case, we have shown that
$\bigl(y,(\sigma_i)_*(\beta)\bigr) \in U(f_{-i}) \times \underset{g \leq f_{-i}}{\bigcap}W(g,\epsilon)$.

\item Assume that $i$ is in $f([n])$ and $i+1$ is not.
Then, $\sup(f_{-(i+1)})=\sigma _i \circ \sup(f)$ and
$\red(f)=\red(f_{-(i+1)})$, whence $\sup(f_{-(i+1)})^*(y)=\sup(f)^*(s_i(y)) \in U(\red(f))$
i.e.\ $y \in U(f_{-(i+1)})$.
Let $g \geq f_{-(i+1)}$. Then, $(\delta _i)_*(g)^{+i} \geq f$, whilst
$\beta \in W(g,\epsilon)$ since $s_i(\beta) \in W((\delta _i)_*(g)^{+i},\epsilon)$.
It follows that $\bigl(y,(\sigma _i)_*(\beta)) \in U(f_{-(i+1)}\bigr) \times
\underset{g \geq f_{-(i+1)}}{\bigcap}W(g,\epsilon)$.

\item If $i+1$ is in $f([n])$ and $i$ is not, a similar proof as the above one shows that
$(y,(\sigma _i)_*(\beta)) \in U(f_{-i}) \times
\underset{g \geq f_{-i}}{\bigcap}W(g,\epsilon)$.

\item Assume that both $i$ and $i+1$ belong to $f([n])$. We claim that
$\{i,i+1\} \subset f\{j\}$ for some $j \in [m]$.
Assuming this holds, then $y=d_i(s_i(y)) \in U(f_{-i})$ since $\card_f(i) \geq 2$,
whilst, for every $g \geq f_{-i}$, one has $(\delta _i)_*(g)^{+i}\geq f$, therefore
$\beta \in W((\delta _i)_*(g)^{+i},\epsilon)$ i.e.\ $(\sigma _i)_*(\beta) \in W(g,\epsilon)$.
This shows that $\bigl(y,(\sigma _i)_*(\beta)\bigr) \in U(f_{-i}) \times \underset{g \geq f_{-i}}{\bigcap} W(g,\epsilon)$.

Let us finally prove the above claim. Assume indeed that no $j \in [m]$ satisfies
$\{i,i+1\} \subset f\{j\}$, and let $\sigma : [k'] \twoheadrightarrow [n]$
be the epimorphism corresponding to $\red(f)$.
Then, for some one-to-one morphism $\delta : [k] \hookrightarrow [k']$ and for some $j' \in [k'-1]$,
the square
$$\begin{CD}
[k'] @>{\sup(f)}>> [k] \\
@V{\sigma _{j'}}VV @V{\sigma _i}VV \\
[k'] @>{\delta}>> [k]
\end{CD}$$
is commutative with $\sigma (j') \neq \sigma (j'+1)$.
Then, $(\sigma_i \circ \sup(f))^*(y) \in U(\red(f))$ yields
$s_{j'}(\delta ^*(y)) \in U(\sigma)$, which contradicts property (e)
of Proposition \ref{uproperties} applied to $\tau=\sigma_{j'}$.
\end{itemize}
In any case, we have shown that $\bigl(y,(\sigma _i)_*(\beta)\bigr) \in U_{k,\epsilon.}$
\end{itemize}

This finishes the proof of Proposition \ref{compatibility}.

\section{Application to simplicial Hausdorff spaces}\label{sepsec}

In this section, we assume that $A$ is a simplicial Hausdorff space.
We arbitrarily choose two pairs $(x,\alpha)$ and $(y,\beta)$ such that
$x\in A_n$ is non-degenerate, $y\in A_m$ is non degenerate,
$\alpha \in \Delta ^n \setminus \partial \Delta ^n$ and
$\beta \in \Delta ^m \setminus \partial \Delta ^m$.
We also pick an arbitrary integer $N \geq \max(n,m)$. Example \ref{exvois} yields that we may choose
an $x$-admissible family
$(U_\sigma)_{\sigma : [N] \twoheadrightarrow [n]}$ and a $y$-admissible family
$(V_\sigma)_{\sigma : [N] \twoheadrightarrow [n]}$.
Using the procedure of Sections \ref{upar} and \ref{suitfam}, we recover two families
$(U(f))$ and $(V(g))$.

We also choose an $\alpha$-admissible family
$(I_{i,j})$ of intervals and a $\beta$-admissible family of intervals. For every $\epsilon \in \,\left]0,1\right[$, we obtain
respective families $(W(f,\epsilon))_{f:[n] \Rightarrow [k]}$ and $(T(f,\epsilon))_{f:[n] \Rightarrow [k]}$ corresponding
to $(I_{i,j})$ and to $(J_{k,l})$.

For every $k \in \N$, the procedure of Section \ref{uepsdef} yields subsets $U_{k,\epsilon}$ and
$U'_{k,\epsilon}$ of $A_k \times \Delta^k$ from $(U(f))$ and $(W(f,\epsilon))$,
and subsets $V_{k,\epsilon}$ and
$V'_{k,\epsilon}$ of $A_k \times \Delta^k$ from $(V(g))$ and $(T(g,\epsilon))$.
This yields open subsets $U_\epsilon$ and $V_\epsilon$ of $|A|$.

\subsection{A sufficient condition for disjointness}

\begin{prop}\label{separationprop}
Assume that for every $k\in \N$, every onto
$f:[n] \Rightarrow [k]$, every onto
$g:[m] \Rightarrow [k]$, and every
$\epsilon \in \,\left]0,1\right[$, one has
$$\Bigl(U(f) \times \overline{W(f,\epsilon)}\Bigr) \cap
\Bigl(V(g) \times \overline{T(g,\epsilon)}\Bigr)=\emptyset.$$
Then, there exists an $\eta \in \,\left]0,1\right[$ such that $U_\eta \cap V_\eta =\emptyset$.
\end{prop}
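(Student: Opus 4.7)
My plan is to produce $\eta\in \,]0,1[$ with $U_\eta\cap V_\eta=\emptyset$ by establishing the levelwise disjointness $U_{k,\eta}\cap V_{k,\eta}=\emptyset$ for every $k\in\N$; the corollary to Proposition~\ref{compatibility} then guarantees that this is equivalent to disjointness in $|A|$. Moreover any $\Gamma'$-morphism $[n]\Rightarrow[k]$ assigns $n+1$ pairwise disjoint non-empty subsets of $[k]$, forcing $k\geq n$, so only $k\geq\max(n,m)$ needs to be considered.

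Suppose, for contradiction, that some $(z,\gamma)\in U_{k,\eta}\cap V_{k,\eta}$, with witnesses $f:[n]\Rightarrow[k]$ and $g:[m]\Rightarrow[k]$ satisfying $z\in U(f)\cap V(g)$ and $\gamma\in\overline{W(f,\eta)}\cap\overline{T(g,\eta)}$. If both $f$ and $g$ are onto, this directly violates the hypothesis. Otherwise, assume $f$ is not onto and write $f=\delta_*(\red(f))$ with $\delta=\sup(f):[k'']\hookrightarrow[k]$. The membership $\gamma\in\overline{W(f,\eta)}$ forces $\sum_{p\notin\delta([k''])}t_p(\gamma)\leq\eta$, so $\gamma$ concentrates on the face $\delta(\Delta^{k''})$ as $\eta$ shrinks.

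The key technical observation is that, for $\eta$ smaller than a fixed threshold $\eta^*>0$ depending only on the previously chosen families $(I_{i,j})$ and $(J_{k,l})$, each $g(\{j\})$ must meet $\delta([k''])$. Indeed, if some $j_0$ failed this, then $S_{j_0}(\gamma):=\sum_{p\in g(\{j_0\})}t_p(\gamma)\leq\eta$; but the condition $\gamma\in\overline{T(g,\eta)}$ forces every ratio $S_j/S_i$ (for $i<j$) into the compact interval $\overline{J_{i,j}}\subset\,]0,+\infty[$, making all $S_j(\gamma)$ mutually comparable within bounded constants, hence each bounded by $C\eta$ for some $C$ depending only on the interval data; since $\sum_j S_j(\gamma)\geq 1-\eta$, we would get $\eta\geq 1/(C+1)$, contradicting smallness. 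We may therefore define a face-restriction $g':[m]\Rightarrow[k'']$ by $g'(\{j\}):=\delta^{-1}\bigl(g(\{j\})\cap\delta([k''])\bigr)$.

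The induction step is then to transfer the pair $(z,\gamma)$ down to $(\delta^*(z),\gamma'')\in A_{k''}\times\Delta^{k''}$, where $\gamma''$ is the renormalized projection of $\gamma$ onto the $\delta$-face. Using Proposition~\ref{uproperties} (and its $V$-side analogue), one verifies $\delta^*(z)\in U(\red(f))\cap V(g')$ and that $\gamma''$ meets the corresponding $\overline{W}$ and $\overline{T}$ sets at level $k''$ with a new parameter $\eta'<\eta^*$ controlled by $\eta$; this yields $U_{k'',\eta'}\cap V_{k'',\eta'}\neq\emptyset$ at the strictly lower level $k''<k$. Iterating the reduction strictly decreases $k$ and must eventually produce a witness where both morphisms are onto, delivering the desired contradiction with the hypothesis. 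The main obstacle of this strategy is the rigorous verification that $(\delta^*(z),\gamma'')$ lies in the reduced intersection with $\eta'$ uniformly controlled, which requires a careful translation between the $\Gamma'$-morphisms $f,g,g'$ and the underlying face/epimorphism data of $\Delta$.
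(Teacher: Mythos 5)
Your reduction step has a genuine gap at exactly the point you flag as "the main obstacle", and it cannot be repaired by tuning $\eta'$. The sets $\overline{T(g,\epsilon)}$ impose two kinds of constraints: the mass condition $\sum_{p\in g([m])}t_p\geq 1-\epsilon$, which is relaxed by enlarging $\epsilon$, and the ratio conditions $\bigl(\sum_{p\in g(\{j\})}t_p\bigr)/\bigl(\sum_{p\in g(\{i\})}t_p\bigr)\in\overline{J_{i,j}}$, which are \emph{independent} of $\epsilon$ because the intervals $J_{i,j}$ were fixed once and for all before the construction of $U_\epsilon$ and $V_\epsilon$. When you project $\gamma$ onto the face $\delta(\Delta^{k''})$ and renormalize, the $f$-side data are preserved exactly (since $f([n])=\delta([k''])$), but each $g$-side sum $S_j=\sum_{p\in g(\{j\})}t_p(\gamma)$ is replaced by $S_j'=\sum_{p\in g(\{j\})\cap\delta([k''])}t_p(\gamma)$, which can differ from $S_j$ by up to $\eta$. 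If a ratio $S_j/S_i$ sits at an endpoint of $\overline{J_{i,j}}$ (and nothing prevents this, since $\gamma$ is only known to lie in the \emph{closure}), the perturbed ratio $S_j'/S_i'$ leaves $\overline{J_{i,j}}$, so $\gamma''$ lies in $\overline{T(g',\eta')}$ for \emph{no} value of $\eta'$. The hypothesis of the proposition then gives you nothing about the reduced pair, and the contradiction never materializes.

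The paper avoids this in two complementary ways, and it is instructive to see why both are needed. For $k$ up to the fixed bound $2(n+2)(m+2)$ it does not project a single point: it assumes the intersection is non-empty for $\epsilon=1/i$ for all $i$, extracts by compactness a limit point $\overline{\alpha}$ of the $\alpha_i$, and observes that $\overline{\alpha}$ lies \emph{exactly} on a boundary face (since the non-surjectivity of $f$ forces some coordinate to vanish in the limit); the closed constraints pass to the limit with no perturbation, and Remark \ref{compatadher} pushes the point down one level exactly. The price is that $\eta$ must shrink at each level, which is why this only works for finitely many $k$. For $k$ beyond that bound, the paper uses a pigeonhole argument to find two coordinates $k<l$ that can be \emph{merged} (both inside some $f(\{i\})\cap g(\{j\})$, or both outside $f([n])\cup g([m])$, or both in some $f(\{i\})\setminus g([m])$); merging preserves every defining sum and ratio exactly, so $\eta$ is untouched and the upward induction closes. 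Your scheme tries to do everything with one uniform $\eta$ and one lossy projection, and that is precisely the configuration that fails. (A smaller point in your favour: your iteration does terminate quickly --- after reducing along $\sup(f)$ the $f$-side becomes onto, and after a second reduction along the $g$-side both become onto --- so the issue is not error accumulation over many steps but the very first projection.)
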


\begin{proof}
Notice first that, for every $0<\epsilon \leq \eta <1$ and every $k \in \N$, one has
$U_{k,\epsilon} \subset U'_{k,\epsilon} \subset U'_{k,\eta}$ and $V_{k,\epsilon} \subset V'_{k,\epsilon} \subset
V'_{k,\eta}$. It thus suffices to provide some $\eta \in \,]0,1[$ such that $U'_{k,\eta} \cap V'_{k,\eta}=\emptyset$.
There are two major steps in the proof of the existence of $\eta$:
\begin{enumerate}[(1)]
\item By a finite induction process, we will show that there is some $\eta \in \,\left]0,1\right[$ such that
$$\forall k \leq 2(n+2)(m+2), \quad U'_{k,\eta} \cap V'_{k,\eta}=\emptyset.$$
\item By another induction process, we will show that such an $\eta$ necessarily satisfies:
$$\forall k \geq 2(n+2)(m+2), \quad U'_{k,\eta} \cap V'_{k,\eta}=\emptyset.$$
\end{enumerate}
We first consider the case $k=0$. If $n >0$ (resp.\ $m>0$) then
$U'_{k,\epsilon}=\emptyset$ (resp.\ $V'_{k,\epsilon}=\emptyset$).
If $m=n=0$, then $U'_{0,\epsilon}=U_{\id_{[0]}}$ and $V'_{0,\epsilon}=V_{\id_{[0]}}$ are
clearly disjoint for every $\epsilon$.
Let now $M$ be an integer such that $0 \leq M< 2(n+2)(m+2)$ and there is
some $\epsilon_M \in \,\left]0,1\right[$ such that
$U_{k,\epsilon _M} \cap V_{k,\epsilon _M}=\emptyset$ whenever $k \leq M$.
Let $f:[n] \Rightarrow [M+1]$ and $g:[m] \Rightarrow [M+1]$.
Clearly, we may assume that either $f$ or $g$ is not onto and use a \emph{reductio ad absurdum} by assuming that
$(U(f) \times \overline{W(f,\epsilon)}) \cap (V(g) \times \overline{T(g,\epsilon)}) \neq \emptyset$
for every $\epsilon \in \,\left]0,1\right[$. We may then choose some
$y \in U(f) \cap V(g)$ and, for every $i \in \N^*$, some $\alpha_i \in \overline{W(f,\frac{1}{i}))} \cap
\overline{T(g,\frac{1}{i})}$.
Since $\overline{W(f,\epsilon _M)} \cap \overline{T(g,\epsilon _{M})}$ is closed in $\Delta ^{M+1}$, and
therefore compact, the sequence $(\alpha_i)$ has an adherence value
$\overline{\alpha} \in \overline{W(f,\epsilon _M)} \cap
\overline{T(g,\epsilon _{M})}$. If $f$ is not onto, then picking some $j \in [M+1] \setminus f([n])$, we see that
$t_j(\alpha_n) \leq \frac{1}{n}$ for any $n\in \N^*$, whence $t_j(\bar{\alpha})=0$. In any case,
we see that $\overline{\alpha} \in \partial \Delta^{M+1}$. \\
We may thus write $\overline{\alpha}=\delta_i^*(\overline{\beta})$ for some $i$ and some $\overline{\beta} \in \Delta^M$.
Therefore $(y,(\delta_i)^*(\bar{\beta}))
\in U'_{M+1,\epsilon _M} \cap V'_{M+1,\epsilon _M}$, which implies, using Remark
\ref{compatadher}, that $(d_i(y),\bar{\beta}) \in U'_{M,\epsilon _M} \cap V'_{M,\epsilon _M}=\emptyset$. This is a contradiction,
whence the existence of some $\epsilon _{f,g} \in \,\left]0,1\right[$ such that
$$\bigl(U(f) \times \overline{W(f,\epsilon_{f,g})}\bigr)
\cap \bigl(V(g) \times \overline{T(g,\epsilon_{f,g})}\bigr) = \emptyset.$$
Setting $\epsilon'_{f,g}=\min(\epsilon _M,\epsilon_{f,g})$,
and then $\epsilon _{M+1}=\min \{\epsilon _{f,g} \quad \text{s.t. $f$ and
  $g$ are not both onto}\}$, we see that $\epsilon _{M+1} \leq \epsilon _M$, and $U'_{M+1,\epsilon _{M+1}} \cap
V'_{M+1,\epsilon _{M+1}}=\emptyset$, whence
$$\forall k \leq M+1, \;U'_{k,\epsilon _{M+1}} \cap V'_{k,\epsilon_{M+1}}=\emptyset.$$
This finite induction process yields some $\eta \in \,\left]0,1\right[$ such that
$$\forall k \leq 2(n+2)(m+2),\; U'_{k,\eta} \cap V'_{k,\eta}=\emptyset.$$
We now move on to step (2). Let then $M \geq 2(n+2)(m+2)$ be such that
$U'_{M,\eta} \cap V'_{M,\eta}=\emptyset$.
Let $f:[n] \Rightarrow [M+1]$ and $g:[m] \Rightarrow [M+1]$.
We will prove that
$$(U(f) \times \overline{W(f,\eta))} \cap (V(g) \times
\overline{T(g,\eta)})=\emptyset$$
by a \emph{reductio ad absurdum}. Assume that there is some
$(z,\gamma) \in (U(f) \times \overline{W(f,\eta)}) \cap (V(g) \times \overline{T(g,\eta)})$.
\begin{itemize}
\item Assume that there exists $i \in [n]$ and $j \in [m]$ such that
$\card(f(\{i\}) \cap g(\{j\})) \geq 2$, and choose two distinct elements $k<l$ in $f(\{i\}) \cap
g(\{j\})$. Define $\gamma ' \in \Delta^M$ by:
$$t_p(\gamma')=\begin{cases}
t_p(\gamma) & \text{for $p<k$} \\
t_k(\gamma)+t_l(\gamma) & \text{for $p=k$} \\
t_p(\gamma) & \text{for $k<p<l$} \\
t_{p+1}(\gamma) & \text{for $l \leq p \leq M$.}
\end{cases}$$
Then, $(\delta _l)_*(\gamma ') \in \overline{W(f,\eta)}
\cap \overline{T(g,\eta)}$, whence $(z,(\delta _l)_*(\gamma ')) \in U_{N+1,\eta}' \cap V_{N+1,\eta}'$ \\
i.e.\ $(d_l(z),\gamma ') \in U_{N,\eta}' \cap V_{N,\eta}'=\emptyset$ (see again Remark \ref{compatadher}), which is a contradiction.
\item Assume now that, for every $i \in [n]$ and $j \in [m]$,
$\card(f(\{i\}) \cap g(\{j\})) \leq 1$.
\begin{itemize}
\item Assume further that
$f([n]) \cup g([m]) \neq [M+1]$, and choose $k\in [M+1] \setminus \bigl(f[n] \cup g([m])\bigr)$.
Define then $\gamma ' \in \Delta^M$ by:
$$t_p(\gamma')=\begin{cases}
\frac{t_p(\gamma)}{1-t_k(\gamma)} & \text{for $p<k$} \\
\frac{t_{p+1}(\gamma)}{1-t_k(\gamma)} & \text{for $p\geq k$.}
\end{cases}$$
Then, $(\delta _k)_*(\gamma ') \in \overline{W(f,\eta)} \cap \overline{T(g,\eta)}$, i.e.\
$(z,(\delta _k)_*(\gamma ')) \in U'_{M+1,\eta} \cap V'_{M+1,\eta}$
and again Remark \ref{compatadher} yields the contradiction $(d_k(z),\gamma ') \in U'_{M,\eta} \cap V'_{M,\eta}=\emptyset$.

\item Assume finally that $f([n]) \cup g([m]) = [M+1]$, e.g.\
$\card(f([n])) \geq \card(g([m]))$, whence $\card(f([n])) \geq (n+2)(m+2)$ and we may find some $i \in [n]$
such that $\card(f(\{i\})) \geq m+3$. Since we have assumed that
$\forall j \in [m], \; \card(f(\{i\}) \cap g(\{j\})) \leq 1$, we deduce that
$$\card (f(\{i\}) \setminus g([m])) \geq 2.$$
We then choose two distinct elements $k<l$ of $f(\{i\}) \setminus g([m])$ and define $\gamma ' \in \Delta^M$ by :
$$t_p(\gamma')=\begin{cases}
t_p(\gamma) &  \text{for $p<k$} \\
t_k(\gamma)+t_l(\gamma) & \text{for $p=k$}  \\
t_p(\gamma) & \text{for $k<p<l$} \\
t_{p+1}(\gamma) & \text{for $l \leq p\leq N$.}
\end{cases}$$
Then, $(\delta _l)^*(\gamma ') \in \overline{W(f,\eta)} \cap \overline{T(g,\eta)}$, 
whence $(z,(\delta _l)^*(\gamma ')) \in U_{M+1,\eta}' \cap V_{M+1,\eta}'$ and Remark \ref{compatadher} yields the final contradiction
$(d_l(z),\gamma ') \in U_{M,\eta}' \cap V_{M,\eta}'=\emptyset$.
\end{itemize}
\end{itemize}
This shows that $U'_{M+1,\eta} \cap V'_{M+1,\eta}=\emptyset$. This induction process then proves that
$\forall k \in \N, \; U'_{k,\eta} \cap V'_{k,\eta}=\emptyset$ and it follows that $U'_\eta \cap V'_\eta =\emptyset$.
\end{proof}

\subsection{Finishing the proof of Theorem \ref{main}}

Theorem \ref{main} is now within our reach. We now assume
$(x,\alpha) \neq (y,\beta)$. We will then show that the various
admissible families that were used in the construction of the open subset $U_\epsilon$ and $V_\epsilon$
may be carefully chosen so that the assumptions of Proposition \ref{separationprop} are satisfied.
We will need to tackle separately the case $x \neq y$ and the case $x=y$.
The following basic lemma on simplicial sets will prove essential:

\begin{lemme}\label{simpset}
Let $N \geq \max(m,n)$.
\begin{enumerate}[(i)]
\item If $x \neq y$, then $\sigma^*(x) \neq \tau^*(y)$ for every $\sigma : [N] \twoheadrightarrow [n]$ and
$\tau : [N] \twoheadrightarrow [m]$.
\item If $x=y$, then  $\sigma^*(x) \neq \tau^*(y)$ for every $\sigma : [N] \twoheadrightarrow [n]$ and
$\tau : [N] \twoheadrightarrow [m]$ with $\sigma \neq \tau$. 
\end{enumerate}
\end{lemme}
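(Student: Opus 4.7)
The plan is to deduce both statements from Lemma \ref{degenlemma}, using the non-degeneracy of $x$ and $y$ together with the fact that the only automorphism of $[n]$ in $\Delta$ is the identity. I would argue both parts by contradiction in parallel.

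Suppose $\sigma^*(x)=\tau^*(y)$. Since $x$ is non-degenerate, Lemma \ref{degenlemma} (applied with $x$ playing the role of the non-degenerate simplex) yields an epimorphism $\rho : [m] \twoheadrightarrow [n]$ with $\sigma = \rho \circ \tau$. Now I would pick a section $\delta : [m] \hookrightarrow [N]$ of $\tau$. Applying $\delta^*$ to $\tau^*(y)=(\rho \circ \tau)^*(x)$ gives
$$y = (\tau \circ \delta)^*(y) = (\rho \circ \tau \circ \delta)^*(x) = \rho^*(x).$$
So $y = \rho^*(x)$.

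The next step is to show $\rho$ must be the identity of $[n]$. Indeed, if $\rho$ were not one-to-one, then, as in the proof of Lemma \ref{degenlemma}, we could factor $\rho = \rho' \circ \sigma_i^m$ for some $i$ and some morphism $\rho'$, which would give $y = \rho^*(x) = s_i\bigl((\rho')^*(x)\bigr)$ and contradict the non-degeneracy of $y$. Hence $\rho$ is a bijection $[m] \to [n]$; since the only order-preserving bijection between $[m]$ and $[n]$ exists when $m=n$ and is the identity, we conclude $m=n$ and $\rho = \id_{[n]}$. Consequently $y = x$ and $\sigma = \tau$, which contradicts the hypothesis in each case: in part (i) the equality $y=x$ is forbidden, and in part (ii) the equality $\sigma=\tau$ is forbidden.

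No step is really an obstacle; the only mild subtlety is remembering that the argument extracting a degeneracy from a non-injective morphism in $\Delta$ (which was already used to prove (i)$\Rightarrow$(ii) of Lemma \ref{degenlemma}) also forces a \emph{bijective} order-preserving morphism $[m]\to[n]$ to be the identity, and this is what simultaneously rules out $x \neq y$ in (i) and $\sigma \neq \tau$ in (ii).
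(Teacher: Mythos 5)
Your proof is correct and follows essentially the same route as the paper: both rest on Lemma \ref{degenlemma} together with the non-degeneracy of $x$ and $y$, concluding $m=n$, $\sigma=\tau$ and $x=y$ to contradict the hypothesis of each part. The only cosmetic difference is that the paper applies Lemma \ref{degenlemma} symmetrically to both $x$ and $y$ to get $\sigma=\tau$ first and then uses injectivity of $\sigma^*$, whereas you apply it once to $x$ and then rule out a non-injective $\rho$ directly via the non-degeneracy of $y$; both arguments are sound.
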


\begin{proof}
Assume that there is some $\sigma : [N] \twoheadrightarrow [n]$ and some
$\tau : [N] \twoheadrightarrow [m]$ such that
$\sigma^*(x)=\tau^*(y)$. Applying Lemma \ref{degenlemma} to both $x$ and $y$, we easily find that
$m=n$ and $\sigma=\tau$. However, $\sigma^*$ is one-to-one since $\sigma$ has a section in $\Delta$.
Hence, $x=y$, which proves both statements in the lemma.
\end{proof}

\subsubsection{The case $x \neq y$}

Since $A_{m+n}$ is Hausdorff, the above lemma and the method of
Example \ref{exvois} shows that we may choose the families
$(U_\sigma)_{\sigma : [N] \twoheadrightarrow [n]}$ and  $(V_\tau)_{\tau : [N] \twoheadrightarrow [m]}$ so that
$$\forall (\sigma, \tau), \; U_\sigma \cap V_\tau =\emptyset.$$
The following lemma will then show that we may use Proposition \ref{separationprop},
which will complete the case $x \neq y$:

\begin{lemme}\label{uetvsepares}
\begin{enumerate}[(a)]
\item For every $k \leq m+n$ and every $\sigma : [k] \twoheadrightarrow
  [n]$ and $\tau : [k] \twoheadrightarrow [m]$, one has $U_\sigma \cap V_\tau =\emptyset$.
\item For every $k \in \mathbb{N}$, every $\sigma : [k] \twoheadrightarrow [n]$ and
$\tau : [k] \twoheadrightarrow [m]$, one has $U(\sigma) \cap V(\tau) =\emptyset$.
\end{enumerate}
\end{lemme}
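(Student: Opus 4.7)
The plan is to split the proof into the two claims, handle (a) directly from the definition of the extended family $U_\sigma$, and then reduce (b) to (a) by a short combinatorial argument. Throughout I will take for granted that the ambient integer $N$ is chosen so that $N \geq m+n$; this is the natural choice, and without it the statement in (a) has no content beyond the already-postulated disjointness at level $N$.

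For part (a), I fix $k \leq m+n$ together with epimorphisms $\sigma : [k] \twoheadrightarrow [n]$ and $\tau : [k] \twoheadrightarrow [m]$. Since $N \geq m+n \geq k$, one can pick an epimorphism $\rho : [N] \twoheadrightarrow [k]$, and then $\sigma \circ \rho$ and $\tau \circ \rho$ are epimorphisms at the top level. If $z \in U_\sigma \cap V_\tau$, the defining intersection $U_\sigma = \bigcap_\rho (\rho^*)^{-1}(U_{\sigma \circ \rho})$ (and its analogue for $V_\tau$) forces $\rho^*(z)$ to lie in $U_{\sigma \circ \rho} \cap V_{\tau \circ \rho}$, which we arranged to be empty at level $N$. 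This contradiction proves (a).

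For part (b), the case $k \leq N$ is immediate: Proposition \ref{uproperties}(b) gives $U(\sigma) \subset U_\sigma$ and $V(\tau) \subset V_\tau$, and (a) applies. For $k > N$, the strategy is to produce a single monomorphism $\delta : [k''] \hookrightarrow [k]$ with $k'' \leq m+n$ such that both $\sigma \circ \delta$ surjects onto $[n]$ and $\tau \circ \delta$ surjects onto $[m]$. Such a $\delta$ lies simultaneously in $I_\sigma$ and in $I_\tau$, so any $z \in U(\sigma) \cap V(\tau)$ pulls back to $\delta^*(z) \in U_{\sigma \circ \delta} \cap V_{\tau \circ \delta}$, which is empty by (a). To construct $\delta$ I build a subset $S \subset [k]$ greedily: first take (the image of) a section of $\sigma$, providing $n+1$ elements hitting every $\sigma$-fibre, and then add one element per $\tau$-fibre not already met. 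Since at least one $\tau$-fibre is hit by the initial section, at most $m$ additional elements are needed, whence $|S| \leq m+n+1$ and $k'' = |S|-1 \leq m+n \leq N$; the order-preserving inclusion of $S$ is the desired $\delta$.

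The main obstacle will be precisely this size estimate: the obvious ``section of $\sigma$ plus section of $\tau$'' construction yields $m+n+2$ elements, which would only give $k'' \leq m+n+1$ and fall just outside the range where (a) is available. The saving comes from the pedestrian observation that a section of $\sigma$ cannot avoid every $\tau$-fibre, so only $m$ rather than $m+1$ elements need to be added in the second phase.
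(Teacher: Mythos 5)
Your proof is correct. Part (a) is essentially the paper's own argument: both of you pull a hypothetical common point back along an epimorphism $[N]\twoheadrightarrow[k]$ (you via the defining intersection of the extended $U_\sigma$, the paper via Lemma \ref{firstproperties}(ii), which amounts to the same thing) and invoke the disjointness postulated at the top level. Part (b) is where you genuinely diverge for $k>m+n$. The paper runs an upward induction on $k$: for $\sigma,\tau$ defined on $[k+1]$ it considers the sets $\calJ_\sigma$ and $\calJ_\tau$ of indices $i$ for which $\sigma\circ\delta_i$ (resp.\ $\tau\circ\delta_i$) stays onto, notes $\card\calJ_\sigma\geq k+2-n$ and $\card\calJ_\tau\geq k+2-m$ so that they meet once $k\geq m+n$, and uses Proposition \ref{uproperties}(c) at a common index $i$ to push $U(\sigma)\cap V(\tau)$ into $U(\sigma\circ\delta_i)\cap V(\tau\circ\delta_i)$, empty by the induction hypothesis. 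You instead descend in a single step: a subset $S\subset[k]$ of cardinality at most $m+n+1$ meeting every fibre of both $\sigma$ and $\tau$ (a section of $\sigma$ plus at most $m$ extra points, the saving coming from the observation that the section already meets some $\tau$-fibre) gives a monomorphism $\delta\in I_\sigma\cap I_\tau$ with target $[k'']$, $k''\leq m+n$, and the very definition of $U(\sigma)$ and $V(\tau)$ sends any common point into $U_{\sigma\circ\delta}\cap V_{\tau\circ\delta}$, empty by (a). Your size bound is the integrated form of the paper's counting inequality; what your route buys is the elimination of the induction and of any appeal to Proposition \ref{uproperties}, at the price of an explicit combinatorial construction. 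One cosmetic remark: in this subsection the paper takes $N=m+n$ exactly, so your dichotomy ``$k\leq N$ versus $k>N$'' coincides with ``$k\leq m+n$ versus $k>m+n$''; if one insisted on $N>m+n$ your appeal to (a) for $m+n<k\leq N$ would need the extra (easy) remark that the argument of (a) also covers those levels, but as the paper sets things up this gap does not arise.
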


\begin{proof}
\begin{enumerate}[(a)]
\item Choose an arbitrary $\sigma ':  [n+m] \twoheadrightarrow [k]$. Then, the assumptions show that
$(\sigma ')^*(U_\sigma \cap V_\tau) \subset U_{\sigma \circ \sigma '}
  \cap V_{\tau \circ \sigma '}=\emptyset$.
\item If $k \leq m+n$, then $U(\sigma) \subset U_\sigma$, and
  $V(\tau) \subset V_\tau$, whence we may readily use (a).
In the case $k \geq m+n$, we proceed by onward induction. Let $k\geq m+n$ be such that
$U(\sigma) \cap V(\tau)=\emptyset$ for every $\sigma : [k] \twoheadrightarrow [n]$ and $\tau:[k] \twoheadrightarrow [m]$. \\
Let then $\sigma : [k+1]
  \twoheadrightarrow [n]$ and $\tau:[k+1] \twoheadrightarrow [m]$.
We set $\calJ_\sigma := \{ i \in [k+1] : \sigma \circ
  \delta _i : [k] \twoheadrightarrow [n] \}$ and define $\calJ_\tau$ accordingly.
Then, $\# \calJ_\sigma \geq k+2-n$ and $\#\calJ_\tau \geq k+2- m$. Thus
$\#\calJ_\sigma + \#\calJ_\tau > k+2$, and so
$\calJ_\sigma \cap \calJ_\tau \neq \emptyset$. \\
Choose $i \in \calJ_\sigma \cap \calJ_\tau$. Point (c) in Proposition \ref{uproperties} and
the induction assumption then show that $d_i(U(\sigma) \cap V(\tau)) \subset U(\sigma \circ \delta _i)
  \cap V(\tau \circ \delta _i)=\emptyset$, whence $U(\sigma) \cap V(\tau) =\emptyset$.
\end{enumerate}
\end{proof}

\subsubsection{The case $x=y$}

We now set $N:=(n+1)^2$. Again, Lemma \ref{simpset} and the method of
Example \ref{exvois} show we may choose the $x$-admissible family
$(U_\sigma )_{\sigma : [(n+1)^2] \twoheadrightarrow [n]}$ so that
$ U_\sigma \cap U_\tau =\emptyset$ whenever $\sigma \neq \tau$.

For $i \in [n]$, set $\delta(i):=t_i(\alpha)-t_i(\beta)$.
Since $\alpha \neq \beta$, and $\underset{0\leq i \leq n}{\sum}\delta(i)=0$, we may find two indices
$(k,l) \in [n]^2$ such that $\delta(k) <0<\delta(l)$, e.g.\ with $k<l$ (if not, we simply switch $(x,\alpha)$ and $(y,\beta)$).
Clearly, we may choose our $\alpha$-admissible family $(I_{i,j})$ and our $\beta$-admissible family $(J_{i,j})$
so that $\overline{I_{k,l}} \cap \overline{J_{k,l}} =\emptyset$.
Obviously
$$\forall \epsilon \in \,\left]0,1\right[, \overline{W(\id_{[n]},\epsilon)} \cap \overline{T(\id_{[n]},\epsilon)} =\emptyset.$$
Proposition \ref{separationprop} may then be used thanks to the next lemma, and this will complete both the case $x=y$ and the proof
of Theorem \ref{main}:

\begin{lemme}Let $\epsilon \in \,\left]0,1\right[$.
\begin{enumerate}[(a)]
\item For every $k \leq (n+1)^2$, every onto morphism
$f:[n] \Rightarrow [k]$ and every onto morphism $g: [n] \Rightarrow [k]$, one has
$$f \neq g \,\Rightarrow\, U(f) \cap U(g) =\emptyset$$
\item For every $k \in \mathbb{N}$, every onto morphism
$f:[n] \Rightarrow [k]$ and every onto morphism $g: [n] \Rightarrow [k]$,
$$\bigl(U(f) \times \overline{W(f,\epsilon)}\bigr)
\cap \bigl(U(g) \times \overline{T(g,\epsilon)}\bigr)=\emptyset$$
\end{enumerate}
\end{lemme}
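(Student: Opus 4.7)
My plan is to deduce (a) from the disjointness of the chosen $x$-admissible family $(U_\sigma)_{\sigma : [N] \twoheadrightarrow [n]}$, and to obtain (b) by splitting into the cases $f = g$ and $f \neq g$: in the first the two closed simplex pieces $\overline{W(f,\epsilon)}$ and $\overline{T(f,\epsilon)}$ are already disjoint, while in the second the $A_k$-factors are disjoint---by (a) for $k \leq N$ and by onward induction along face maps for $k > N$.

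For (a), let $\sigma, \tau : [k] \twoheadrightarrow [n]$ be the distinct epimorphisms corresponding to $f$ and $g$. Since $k \leq N = (n+1)^2$, Proposition \ref{uproperties}(b) gives $U(f) \subset U_\sigma$ and $U(g) \subset U_\tau$, reducing matters to $U_\sigma \cap U_\tau = \emptyset$. But if $z$ were in that intersection, then for any epimorphism $\rho : [N] \twoheadrightarrow [k]$ the extension formula would give $\rho^*(z) \in U_{\sigma \circ \rho} \cap U_{\tau \circ \rho}$; surjectivity of $\rho$ forces $\sigma \circ \rho \neq \tau \circ \rho$, contradicting the pairwise disjointness at level $[N]$.

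For (b), first assume $f = g$. Let $(k_0, l_0) \in [n]^2$ with $k_0 < l_0$ be the pair that was fixed before the lemma so that $\overline{I_{k_0, l_0}} \cap \overline{J_{k_0, l_0}} = \emptyset$. Any $(t_0,\ldots,t_k) \in \overline{W(f,\epsilon)} \cap \overline{T(f,\epsilon)}$ would force its $(k_0, l_0)$-ratio $\bigl(\sum_{p \in f(\{l_0\})} t_p\bigr)\big/\bigl(\sum_{p \in f(\{k_0\})} t_p\bigr)$ to lie simultaneously in $\overline{I_{k_0,l_0}}$ and $\overline{J_{k_0,l_0}}$, which is absurd. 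Now assume $f \neq g$; I prove $U(f) \cap U(g) = \emptyset$ by onward induction on $k$, the base case $k \leq N$ being (a). For the induction step, let $\sigma, \tau : [k+1] \twoheadrightarrow [n]$ be distinct epimorphisms (with $k+1 > N$), and set $\mathcal{J}_\sigma := \{i \in [k+1] : \sigma \circ \delta_i \text{ is onto}\}$ and $\mathcal{J}_\tau$ analogously. A standard count (each excluded index corresponds to a fiber of size one) shows $|\mathcal{J}_\sigma|, |\mathcal{J}_\tau| \geq k + 1 - n$, whence $|\mathcal{J}_\sigma \cap \mathcal{J}_\tau| \geq k - 2n$, which is at least $2$ as soon as $k \geq (n+1)^2$ and $n \geq 1$ (the case $n=0$ is vacuous since there is a unique onto morphism $[0] \Rightarrow [k+1]$). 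Thus some $i \in \mathcal{J}_\sigma \cap \mathcal{J}_\tau$ satisfies $\sigma \circ \delta_i \neq \tau \circ \delta_i$, and Proposition \ref{uproperties}(c) yields $d_i\bigl(U(\sigma) \cap U(\tau)\bigr) \subset U(\sigma \circ \delta_i) \cap U(\tau \circ \delta_i) = \emptyset$ by the induction hypothesis.

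The main obstacle is precisely that last move: we must locate an index $i \in \mathcal{J}_\sigma \cap \mathcal{J}_\tau$ whose deletion still witnesses $\sigma \neq \tau$, which, whenever the disagreement set $\{j : \sigma(j) \neq \tau(j)\}$ is a singleton, requires the strict lower bound $|\mathcal{J}_\sigma \cap \mathcal{J}_\tau| \geq 2$. This combinatorial need is exactly what dictates the choice $N = (n+1)^2$ here, rather than the smaller $N = m+n$ that sufficed in Lemma \ref{uetvsepares} for the case $x \neq y$.
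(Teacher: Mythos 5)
Your proof is correct, but part (b) follows a genuinely different route from the paper's. Part (a) is identical (push forward along an epimorphism $\sigma':[(n+1)^2]\twoheadrightarrow[k]$ and use pairwise disjointness at level $N$). For (b), the paper runs a \emph{single} onward induction on the product sets $\bigl(U(f)\times\overline{W(f,\epsilon)}\bigr)\cap\bigl(U(g)\times\overline{T(g,\epsilon)}\bigr)$: for $k+1>(n+1)^2$ a pigeonhole argument on the two interval partitions produces consecutive indices $l,l+1$ in a common block $f(\{i\})\cap g(\{j\})$, and applying $d_l\times(\sigma_l)_*$ descends to level $k$. This coupling of the two factors is unavoidable in that scheme, because the descent may well give $f_{-l}=g_{-l}$ even when $f\neq g$, in which case disjointness at the lower level is carried by the interval factor alone. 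You instead decouple the coordinates: for $f=g$ you observe that $\overline{W(f,\epsilon)}\cap\overline{T(f,\epsilon)}=\emptyset$ at \emph{every} level directly from the ratio at the distinguished pair, with no induction; and for $f\neq g$ you prove the strictly stronger statement that $U(f)\cap U(g)=\emptyset$ at every level, via an induction whose step picks a face index $i$ keeping both $\sigma\circ\delta_i$ and $\tau\circ\delta_i$ surjective while still separating them --- your count $\card(\calJ_\sigma\cap\calJ_\tau)\geq k-2n\geq 2$ together with the observation that at most one index can collapse the difference between $\sigma$ and $\tau$ is exactly what is needed, and Proposition \ref{uproperties}(c) closes the step. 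Both arguments are sound; yours buys a cleaner separation of the two factors and a stronger intermediate disjointness statement, while the paper's is the one whose pigeonhole on block intersections actually forces the choice $N=(n+1)^2$. Your closing remark slightly overstates the role of your own bound: $k-2n\geq 2$ only requires $k\geq 2n+2$, so it is the paper's block-intersection argument, not yours, that dictates $(n+1)^2$.
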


\begin{proof}
\begin{enumerate}[(a)]
\item Obviously, it suffices to prove that
$U_\sigma \cap U_\tau =\emptyset$ whenever $\sigma \neq \tau$.
Let then $\sigma : [k] \twoheadrightarrow [n]$ and
$\tau : [k] \twoheadrightarrow [n]$, with $\sigma \neq \tau$.
Choose some $\sigma ':[(n+1)^2] \twoheadrightarrow [k]$.
Then, $(\sigma ')^*(U_\sigma \cap U_\tau) \subset U_{\sigma \circ \sigma '}
\cap U_{\tau \circ \sigma '}$.
However, if $\sigma \circ \sigma '=\tau \circ \sigma '$, then $\sigma
=\tau$. Thus $\sigma \circ \sigma '\neq \tau \circ \sigma '$
and we deduce from the hypothesis that
$U_{\sigma \circ \sigma '} \cap U_{\tau \circ \sigma '} =\emptyset$, which yields $U_\sigma \cap U_\tau =\emptyset$.

\item Let $k \leq (n+1)^2$. Clearly
$\overline{W(f,\epsilon)} \cap \overline{T(f,\epsilon)}
=\emptyset$ given the construction of the families $(I_{i,j})$ and $(J_{i,j})$.
Then, either $f \neq g$ and $U(f) \cap U(g) =\emptyset$, or $f=g$ and
$\overline{W(f,\epsilon)} \cap \overline{T(g,\epsilon)} =\emptyset$ since
$\overline{I_{k,l}} \cap \overline{J_{k,l}} =\emptyset$. In any case, the claimed property is proven. \\
For the case $k \geq (n+1)^2$, we proceed by onward induction.
Let $k\geq (n+1)^2$ be such that the claimed result holds for every pair of onto morphisms from
$[n]$ to $[k]$ in $\Gamma'$.
Let $f:[n] \Rightarrow [k+1]$ and
$g:[n] \Rightarrow [k+1]$ be onto morphisms, and
$\sigma : [k+1] \twoheadrightarrow [n]$ and
$\tau : [k+1] \twoheadrightarrow [n]$ the epimorphisms in $\Delta$ respectively associated to them. \\
Assume that $\# f(\{i\}) \cap \#g(\{j\}) \leq 1$ for every $(i,j) \in [n]^2$.
Then, $\# f(\{i\}) \leq n+1$ for every $i$ (since $g$ is onto), and $k+1=\# f([n]) \leq
(n+1)^2$ which contradicts the fact that $f$ is onto.
Hence, we may find a pair $(i,j) \in [n]^2$ and some $l \in [k+1]$ such that
$\{l,l+1\} \subset f(\{i\}) \cap g(\{j\})$.
Assume finally that there is some $(z,\gamma)$ in $(U(f) \times \overline{W(f,\epsilon)}) \cap (U(g) \times
\overline{T(g,\epsilon)})$. Notice then that
$$\bigl(d_l(z),(\sigma _l)_*(\gamma)\bigr) \in
\bigl(U(f_{-l}) \times \overline{W(f_{-l},\epsilon)}\bigr) \cap \bigl(U(g_{-l}) \times
\overline{T(g_{-l},\epsilon)}\bigr).$$
Indeed, it is obvious on the one hand that
$(\sigma _l)_*(\gamma) \in \overline{W(f_{-l},\epsilon)} \cap \overline{T(g_{-l},\epsilon)}$;
on the other hand $\sigma \circ \delta_l$ and $\tau \circ \delta_l$ clearly are the epimorphisms respectively associated to the onto
morphisms $f_{-l}$ and $g_{-l}$, whence point (c) in Lemma \ref{uproperties} shows that
$d_l(z) \in U(f_{-l}) \cap U(g_{-l})$.
Finally, the contradiction comes from the induction hypothesis since $f_{-l}$ and $g_{-l}$ are both onto.
\end{enumerate}
\end{proof}

\bibliographystyle{plain}

\end{document}